\DeclareRobustCommand{\rvdots}{%
	\vbox{
		\baselineskip4\p@\lineskiplimit\z@
		\kern-\p@
		\hbox{.}\hbox{.}\hbox{.}
}}
\setlist{nosep}
\definecolor{darkblue}{rgb}{0.0, 0.0, 0.55}
        \crefname{subsection}{Subsection}{Subsections}
        \crefname{subsection}{Subsection}{Subsections}
\crefname{theorem}{Theorem}{ }
\crefname{corollary}{Corollary}{ }
\crefname{lemma}{Lemma}{ }
\crefname{remark}{Remark}{ }
\crefname{section}{Section}{ }
\crefname{subsection}{Subsection}{ }
\crefname{figure}{Figure}{ }
\newtheorem{theorem}{Theorem}[section]
\newtheorem{proposition}[theorem]{Proposition}
\newtheorem{lemma}[theorem]{Lemma}
\newtheorem{conjecture}[theorem]{Conjecture}
\theoremstyle{definition}
\newtheorem{definition}[theorem]{Definition}
\newtheorem*{definition*}{Definition}
\newtheorem{remark}[theorem]{Remark}
\newcommand{\diam}{\mathrm{diam}}
\newcommand{\dist}{\mathrm{d}}
    \newtheoremstyle{TheoremNum}
        {\topsep}{\topsep}              %%% space between body and thm
        {\itshape}                      %%% Thm body font
        {}                              %%% Indent amount (empty = no indent)
        {\bfseries}                     %%% Thm head font
        {.}                             %%% Punctuation after thm head
        { }                             %%% Space after thm head
        {\thmname{#1}\thmnote{ \bfseries #3}}%%% Thm head spec
    \theoremstyle{TheoremNum}
\tikzset{initial text={}}
\tikzstyle{initial by arrow}=   [after node path=
\begin{document}

\title{Horofunctions of infinite Sierpinski polygon graphs}

\author{
Daniele D'Angeli %
\footnote{Universit\`a Niccolo Cusano, Rome, Italy \texttt{daniele.dangeli@unicusano.it}}
\and Francesco Matucci%
\footnote{Universit\`{a} %degli Studi 
di Milano--Bicocca, Milan, Italy.
\texttt{francesco.matucci@unimib.it}}
\and Davide Perego%
\footnote{Université de Genève, Geneva, Switzerland.
\texttt{dperego9@gmail.com}}
\and Emanuele Rodaro%
\footnote{%Universit\`{a} degli Studi 
Politecnico di Milano, Milan, Italy.
\texttt{emanuele.rodaro@polimi.it}}}

\maketitle

\begin{abstract}
Generalizing works of D'Angeli and Donno, we describe, starting from an infinite sequence over $r$ letters with $r\neq4i$ and $i\in \mathbb{N}$, a sequence of pointed finite graphs. We study the pointed Gromov–Hausdorff limit graphs giving a description of isomorphim classes in terms of dihedral groups and providing insights on the horofunction boundaries in terms of Busemann and non-Busemann points.
\end{abstract}

\section*{Introduction}

The horofunction boundary is a powerful compactification of a metric space based on analytic tools (\cite{Gromov81}). What makes it interesting and particularly suitable to study is, besides its generality, a geometric description of horofunctions as \textit{weakly-geodesic rays} (\cite{Rieffel02}). A particular effort has been made trying to understand the subset of Busemann points (\cite{Gromov81}), that is, horofunctions which can be described via geodesic rays. Among many different aspects, there have been investigations regarding its connections to dynamical systems (\cite{CP,Pawlik,Perego}), group theory (\cite{BT24,TY16}) and graph theory (\cite{WW06}). \\

The aim of this paper is to generalize and complete the work of the first author (\cite{D17}) on Sierpinski triangle graphs. In particular, we consider Sierpinski polygon graphs where the perimeter of the polygon is not a multiple of $4$. They are Gromov-Hausdorff limits of recursively generated finite graphs inspired by the Sierpinski polygon fractals. The techniques reflect elements from the first author and Donno (\cite{DD14},\cite{DD16}), where Sierpinski gasket graphs were taken into account. Notice 
that in the case of the Sierpinski gasket ($r=4$ in the notation that we will use later),
the corresponding fractal is not finitely ramified, and that is why these strategies cannot be directly applied. The same holds for any $r=4i$ and $i \in \mathbb{N}$.\\

In \cref{sec:1} we give a proper definition of the graphs $\Gamma_\xi^{(r)}$, where $\xi$ is an infinite sequence over a finite alphabet of $r$ letters. The sequence helps describing the recursion process which defines the graph. We then address the isomorphism problem in \cref{sec:2}.\\

\noindent \medskip
\textbf{\cref{thm:iso}.}
\textit{Let $D_r$ be dihedral group of the $r$-polygon. Two graphs $\Gamma_\xi^{(r)}$ and $\Gamma_\eta^{(r)}$ are isomorphic if and only if there exist $\sigma\in D_r$ such that $\eta$ and $\sigma(\xi)$ are cofinal.}

\medskip
In order to prove it, we exploit the aforementioned Gromov-Hausdorff convergence, together with some considerations which highlight a connection with Schreier and tile graphs of self-similar groups (\cite{BDN}).\\

In \cref{sec:3} we analyze the structure of cut points in the graphs and give a countable collection of non-Busemann points and a finite collection of Busemann points when $\Gamma_\xi^{(r)}$ is eventually constant, resulting in the following:\\

\medskip
\noindent \textbf{\cref{mainthm}.}
\textit{The horofunction boundary of $\Gamma_\xi^{(r)}$ with $\xi=wj^{\infty}$ and $w$ a finite prefix, contains two Busemann points and countably many non-Busemann points.}

\section{Sierpinski polygon graphs}\label{sec:1}

Let $r$ be a positive integer that is not a multiple of $4$. Consider the cyclic graph $P_r$ with $r$ vertices and denote them $V_r=\{0,1,\ldots, r-1\}$. Define $f(r):=\min \{ i \mid 4i >r\}$ and $\widetilde{f}(r):=2f(r)$.

The \textbf{Sierpinski graphs} represent a sequence of graphs inspired by the Sierpinski fractals. The $r-$th \textbf{Sierpinski sequence} $\{\Gamma_k^{(r)}: k\geq 1\}$ is recursively constructed as follows:

\begin{enumerate}
    \item $\Gamma_1^{(r)}$ coincides with $P_r$ whose vertex set is $\{0,1,\ldots, r-1\}$;\\
    \item $\Gamma_k^{(r)}$ is obtained by taking $r$ copies of $\Gamma_{k-1}^{(r)}$ numbered from 0 to $r-1$. A vertex $v$ in the $i-$th copy is labeled $vi$. 
    Identify, by gluing the copies, exactly two vertices for each $\Gamma_{k-1}^{(r)}$. One will be identified with a vertex of the $(i+1)$-copy and the other with a vertex in the $(i-1)$-copy.
    In particular,

    $$(i+f(r))^{k-1}i \mod r \ \ \text{  identifies with  } \ \ ((i+1)+\widetilde{f}(r))^{k-1}(i+1) \mod r.$$
\end{enumerate}

\noindent For example if $r=6$, then $f(6)=2$ and $\widetilde{f}(6)=4$ and hence $004$ and $335$ identify in $\Gamma_3^{(6)}$, as well as $115$ and $440$ (see green vertices in \cref{fig:hexagon-gluing}). \

Before going on, let us note that such type of recursive constructions are already present in literature. For example, in theory of automata groups, in particular when describing Schreier graphs (see e.g. \cite{BDN}), and in rational dynamical systems where such graphs are associated to iterations of the system (see \cite{HP}). In the first case, these analogies persist (see further details below), whereas in the second case they cease, as the graphs are intended to approximate the standard Sierpinski fractals.\

The graph $\Gamma_k^{(r)}$ can be seen as a pointed graph $(\Gamma_k^{(r)},w)$, once we choose a sequence $w$ of length $k$ over $V_r$ (see e.g. \cref{fig:hexagon-gluing}).  The choice of the notation for the sequences and the fractal construction growing instead of remaining of the same size is justified by the next definitions.

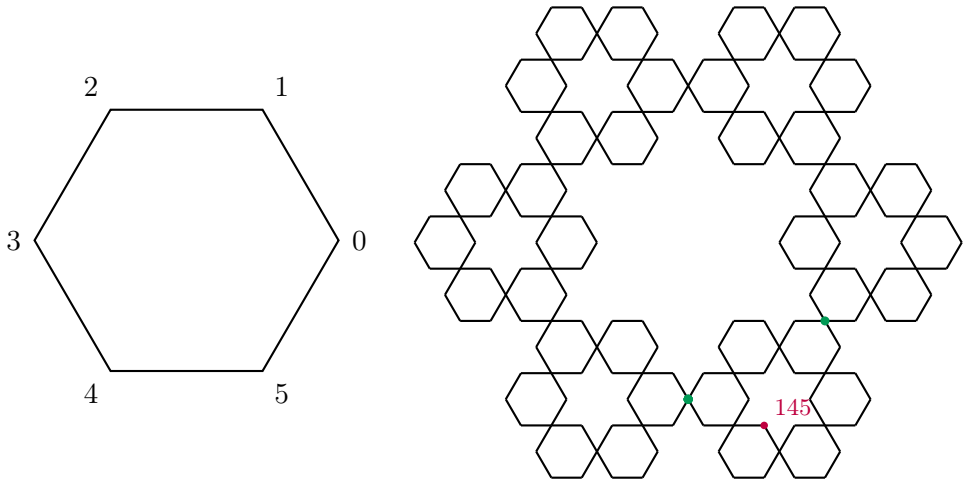
\begin{figure}
    \centering
\begin{tikzpicture}
\begin{scope}[scale=0.2]
\begin{scope} %1
  \foreach \i in {0,60,...,300} {
    \draw[thick] (\i:2) -- (\i+60:2);
  }
\end{scope}
\begin{scope}[xshift=4cm] %2
  \foreach \i in {0,60,...,300} {
    \draw[thick] (\i:2) -- (\i+60:2);
  }
\end{scope}
\begin{scope}[xshift=-2cm,yshift=3.464cm] %3
  \foreach \i in {0,60,...,300} {
    \draw[thick] (\i:2) -- (\i+60:2);
  }
\end{scope}
\begin{scope}[yshift=6.928cm] %4
  \foreach \i in {0,60,...,300} {
    \draw[thick] (\i:2) -- (\i+60:2);
  }
\end{scope}
\begin{scope}[xshift=4cm,yshift=6.928cm] %5
  \foreach \i in {0,60,...,300} {
    \draw[thick] (\i:2) -- (\i+60:2);
  }
\end{scope}
\begin{scope}[xshift=6cm,yshift=3.464cm] %6
  \foreach \i in {0,60,...,300} {
    \draw[thick] (\i:2) -- (\i+60:2);
  }
\end{scope} 
\end{scope}

\begin{scope}[scale=0.2,xshift=12cm] %2
\begin{scope} %1
  \foreach \i in {0,60,...,300} {
    \draw[thick] (\i:2) -- (\i+60:2);
  }

  \draw[purple] (60:2) node[circle,inner sep=1pt,fill]{} node[above right]{\footnotesize 145};
\end{scope}
\begin{scope}[xshift=4cm] %2
  \foreach \i in {0,60,...,300} {
    \draw[thick] (\i:2) -- (\i+60:2);
  }
\end{scope}
\begin{scope}[xshift=-2cm,yshift=3.464cm] %3
  \foreach \i in {0,60,...,300} {
    \draw[thick] (\i:2) -- (\i+60:2);
    \draw[ForestGreen] (180:2) node[circle,inner sep=1.2pt,fill]{} node[above right]{};
  }
\end{scope}
\begin{scope}[yshift=6.928cm] %4
  \foreach \i in {0,60,...,300} {
    \draw[thick] (\i:2) -- (\i+60:2);
  }
\end{scope}
\begin{scope}[xshift=4cm,yshift=6.928cm] %5
  \foreach \i in {0,60,...,300} {
    \draw[thick] (\i:2) -- (\i+60:2);
  }
\end{scope}
\begin{scope}[xshift=6cm,yshift=3.464cm] %6
  \foreach \i in {0,60,...,300} {
    \draw[thick] (\i:2) -- (\i+60:2);
  }
\end{scope} 
\end{scope}

\begin{scope}[scale=0.2,xshift=-6cm,yshift=10.392cm] %3
\begin{scope} %1
  \foreach \i in {0,60,...,300} {
    \draw[thick] (\i:2) -- (\i+60:2);
  }
\end{scope}
\begin{scope}[xshift=4cm] %2
  \foreach \i in {0,60,...,300} {
    \draw[thick] (\i:2) -- (\i+60:2);
  }
\end{scope}
\begin{scope}[xshift=-2cm,yshift=3.464cm] %3
  \foreach \i in {0,60,...,300} {
    \draw[thick] (\i:2) -- (\i+60:2);
  }
\end{scope}
\begin{scope}[yshift=6.928cm] %4
  \foreach \i in {0,60,...,300} {
    \draw[thick] (\i:2) -- (\i+60:2);
  }
\end{scope}
\begin{scope}[xshift=4cm,yshift=6.928cm] %5
  \foreach \i in {0,60,...,300} {
    \draw[thick] (\i:2) -- (\i+60:2);
  }
\end{scope}
\begin{scope}[xshift=6cm,yshift=3.464cm] %6
  \foreach \i in {0,60,...,300} {
    \draw[thick] (\i:2) -- (\i+60:2);
  }
\end{scope} 
\end{scope}

\begin{scope}[scale=0.2,yshift=20.784cm] %4
\begin{scope} %1
  \foreach \i in {0,60,...,300} {
    \draw[thick] (\i:2) -- (\i+60:2);
  }
\end{scope}
\begin{scope}[xshift=4cm] %2
  \foreach \i in {0,60,...,300} {
    \draw[thick] (\i:2) -- (\i+60:2);
  }
\end{scope}
\begin{scope}[xshift=-2cm,yshift=3.464cm] %3
  \foreach \i in {0,60,...,300} {
    \draw[thick] (\i:2) -- (\i+60:2);
  }
\end{scope}
\begin{scope}[yshift=6.928cm] %4
  \foreach \i in {0,60,...,300} {
    \draw[thick] (\i:2) -- (\i+60:2);
  }
\end{scope}
\begin{scope}[xshift=4cm,yshift=6.928cm] %5
  \foreach \i in {0,60,...,300} {
    \draw[thick] (\i:2) -- (\i+60:2);
  }
\end{scope}
\begin{scope}[xshift=6cm,yshift=3.464cm] %6
  \foreach \i in {0,60,...,300} {
    \draw[thick] (\i:2) -- (\i+60:2);
  }
\end{scope} 
\end{scope}

\begin{scope}[scale=0.2,xshift=12cm,yshift=20.784cm] %5
\begin{scope} %1
  \foreach \i in {0,60,...,300} {
    \draw[thick] (\i:2) -- (\i+60:2);
  }
\end{scope}
\begin{scope}[xshift=4cm] %2
  \foreach \i in {0,60,...,300} {
    \draw[thick] (\i:2) -- (\i+60:2);
  }
\end{scope}
\begin{scope}[xshift=-2cm,yshift=3.464cm] %3
  \foreach \i in {0,60,...,300} {
    \draw[thick] (\i:2) -- (\i+60:2);
  }
\end{scope}
\begin{scope}[yshift=6.928cm] %4
  \foreach \i in {0,60,...,300} {
    \draw[thick] (\i:2) -- (\i+60:2);
  }
\end{scope}
\begin{scope}[xshift=4cm,yshift=6.928cm] %5
  \foreach \i in {0,60,...,300} {
    \draw[thick] (\i:2) -- (\i+60:2);
  }
\end{scope}
\begin{scope}[xshift=6cm,yshift=3.464cm] %6
  \foreach \i in {0,60,...,300} {
    \draw[thick] (\i:2) -- (\i+60:2);
  }
\end{scope} 
\end{scope}

\begin{scope}[scale=0.2,xshift=18cm,yshift=10.392cm] %6
\begin{scope} %1
  \foreach \i in {0,60,...,300} {
    \draw[thick] (\i:2) -- (\i+60:2);
  }
  \draw[ForestGreen] (240:2) node[circle,inner sep=1.2pt,fill]{} node[above right]{};
\end{scope}
\begin{scope}[xshift=4cm] %2
  \foreach \i in {0,60,...,300} {
    \draw[thick] (\i:2) -- (\i+60:2);
  }
\end{scope}
\begin{scope}[xshift=-2cm,yshift=3.464cm] %3
  \foreach \i in {0,60,...,300} {
    \draw[thick] (\i:2) -- (\i+60:2);
  }
\end{scope}
\begin{scope}[yshift=6.928cm] %4
  \foreach \i in {0,60,...,300} {
    \draw[thick] (\i:2) -- (\i+60:2);
  }
\end{scope}
\begin{scope}[xshift=4cm,yshift=6.928cm] %5
  \foreach \i in {0,60,...,300} {
    \draw[thick] (\i:2) -- (\i+60:2);
  }
\end{scope}
\begin{scope}[xshift=6cm,yshift=3.464cm] %6
  \foreach \i in {0,60,...,300} {
    \draw[thick] (\i:2) -- (\i+60:2);
  }
\end{scope} 
\end{scope}

\begin{scope}[xshift=-5cm,yshift=2.8cm]
  \foreach \i in {0,...,5} {
    % Compute angle: 0°, 60°, ..., 300°
    \coordinate (P\i) at ({60*\i}:2);
  }

  % Draw the hexagon
  \draw[thick] (P0) -- (P1) -- (P2) -- (P3) -- (P4) -- (P5) -- cycle;

  % Add labels
  \foreach \i in {0,...,5} {
    \node[inner sep=1pt, label={60*\i:\i}] at (P\i) {};
  }
\end{scope}

\end{tikzpicture}

\caption{The cyclic graph $P_6$ and the corresponding pointed graph $(\Gamma_3^{(6)},145)$ with two gluing vertices highlighted.}
    \label{fig:hexagon-gluing}
\end{figure}

\begin{definition}
A sequence of pointed graphs $\{(X_{m},p_{m})\}_{m=1}^{\infty}$ \textbf{strongly converges in the Gromov-Hausdorff sense} to a pointed graph $(X,p)$ if for every $r >0$ there exists $M(r)>0$ such that for every $m>M(r)$ there is an isometry between the two balls $B_{X_{m}}(p_{m},r)$ and $B_{X}(p,r)$ sending $p_{m}$ to $p$. 
\end{definition}
Note that this convergence implies the classic Gromov-Hausdorff convergence for pointed metric spaces (see e.g. \cite[Definition 8.1.1]{BBI}).

In our case, we can consider an infinite sequence $\xi \in V_r^{\infty}$ and denote by $\xi[k]$ the prefix of length $k$ of $\xi$. In this way, we define $(\Gamma_\xi^{(r)}, \xi)$ to be the limit of the  strong Gromov-Hausdorff convergence
$$
(\Gamma_k^{(r)}, \xi[k])\to (\Gamma_\xi^{(r)}, \xi).
$$
See \cref{fig:limit-graph} for an example with $r=6$ and $\xi=1\overline{54}$. The intuition of the Gromov-Hausdorff convergence is that, once we have fixed a sequence $\xi$, our infinite graph is growing around it and the $k$-digit of the sequence is telling us the position of the $k$-step finite graph inside $\Gamma_{k+1}$. An analog can be witnessed in the theory of infinite Schreier graphs of bounded automata groups, where the vertices are infinite sequences and the neighborhood of a vertex is determined by a finite Schreier graph of a prefix (see e.g. \cite{BDN}). Following this point of view, another resemblance between the two cases can be spotted in \cref{lem:in}.
Note that, depending on the infinite sequence we choose, we might end up with different limit graphs $(\Gamma_\xi^{(r)}, \xi)$. 
To avoid cumbersome notation, we will simply refer to $\Gamma_{k}$  and $\Gamma_\xi$, dropping the $r$. Also, we will write $B_{k}(\xi[k],\ell)=B_{\Gamma_k}(\xi[k],\ell)$ and $B(\xi,\ell)=B_{\Gamma_\xi}(\xi,\ell)$ \\

We will refer to vertices of $\Gamma_\xi$ belonging to two different copies of $\Gamma_k$ as \textbf{gluing vertices}.
\\

\begin{figure}
    \centering
\begin{tikzpicture}
\begin{scope}[scale=0.2]
\begin{scope} %1
  \foreach \i in {0,60,...,300} {
    \draw[thick] (\i:2) -- (\i+60:2);
  }
\end{scope}
\begin{scope}[xshift=4cm] %2
  \foreach \i in {0,60,...,300} {
    \draw[thick] (\i:2) -- (\i+60:2);
  }
\end{scope}
\begin{scope}[xshift=-2cm,yshift=3.464cm] %3
  \foreach \i in {0,60,...,300} {
    \draw[thick] (\i:2) -- (\i+60:2);
  }
\end{scope}
\begin{scope}[yshift=6.928cm] %4
  \foreach \i in {0,60,...,300} {
    \draw[thick] (\i:2) -- (\i+60:2);
  }
\end{scope}
\begin{scope}[xshift=4cm,yshift=6.928cm] %5
  \foreach \i in {0,60,...,300} {
    \draw[thick] (\i:2) -- (\i+60:2);
  }
\end{scope}
\begin{scope}[xshift=6cm,yshift=3.464cm] %6
  \foreach \i in {0,60,...,300} {
    \draw[thick] (\i:2) -- (\i+60:2);
  }
  \draw[purple] (240:2) node[circle,inner sep=1pt,fill]{} node[above left]{\footnotesize $1\overline{54}$ \ };
\end{scope} 
\end{scope}

\begin{scope}[scale=0.2,xshift=12cm] %2
\begin{scope} %1
  \foreach \i in {0,60,...,300} {
    \draw[thick] (\i:2) -- (\i+60:2);
  }
\end{scope}
\begin{scope}[xshift=4cm] %2
  \foreach \i in {0,60,...,300} {
    \draw[thick] (\i:2) -- (\i+60:2);
  }
\end{scope}
\begin{scope}[xshift=-2cm,yshift=3.464cm] %3
  \foreach \i in {0,60,...,300} {
    \draw[thick] (\i:2) -- (\i+60:2);
  }
\end{scope}
\begin{scope}[yshift=6.928cm] %4
  \foreach \i in {0,60,...,300} {
    \draw[thick] (\i:2) -- (\i+60:2);
  }
\end{scope}
\begin{scope}[xshift=4cm,yshift=6.928cm] %5
  \foreach \i in {0,60,...,300} {
    \draw[thick] (\i:2) -- (\i+60:2);
  }
\end{scope}
\begin{scope}[xshift=6cm,yshift=3.464cm] %6
  \foreach \i in {0,60,...,300} {
    \draw[thick] (\i:2) -- (\i+60:2);
  }
\end{scope} 
\end{scope}

\begin{scope}[scale=0.2,xshift=-6cm,yshift=10.392cm] %3
\begin{scope} %1
  \foreach \i in {0,60,...,300} {
    \draw[thick] (\i:2) -- (\i+60:2);
  }
\end{scope}
\begin{scope}[xshift=4cm] %2
  \foreach \i in {0,60,...,300} {
    \draw[thick] (\i:2) -- (\i+60:2);
  }
\end{scope}
\begin{scope}[xshift=-2cm,yshift=3.464cm] %3
  \foreach \i in {0,60,...,300} {
    \draw[thick] (\i:2) -- (\i+60:2);
  }
  \draw[dashed,thick] (180:2)++(240:2.1) -- (180:2);
  \draw[dashed,thick] (180:2)++(120:2.1) -- (180:2);
  \draw[] (180:2)++(180:2.6) node[]{\footnotesize $\ldots5\ldots$ \ };
\end{scope}
\begin{scope}[yshift=6.928cm] %4
  \foreach \i in {0,60,...,300} {
    \draw[thick] (\i:2) -- (\i+60:2);
  }
\end{scope}
\begin{scope}[xshift=4cm,yshift=6.928cm] %5
  \foreach \i in {0,60,...,300} {
    \draw[thick] (\i:2) -- (\i+60:2);
  }
\end{scope}
\begin{scope}[xshift=6cm,yshift=3.464cm] %6
  \foreach \i in {0,60,...,300} {
    \draw[thick] (\i:2) -- (\i+60:2);
  }
\end{scope} 
\end{scope}

\begin{scope}[scale=0.2,yshift=20.784cm] %4
\begin{scope} %1
  \foreach \i in {0,60,...,300} {
    \draw[thick] (\i:2) -- (\i+60:2);
  }
\end{scope}
\begin{scope}[xshift=4cm] %2
  \foreach \i in {0,60,...,300} {
    \draw[thick] (\i:2) -- (\i+60:2);
  }
\end{scope}
\begin{scope}[xshift=-2cm,yshift=3.464cm] %3
  \foreach \i in {0,60,...,300} {
    \draw[thick] (\i:2) -- (\i+60:2);
  }
\end{scope}
\begin{scope}[yshift=6.928cm] %4
  \foreach \i in {0,60,...,300} {
    \draw[thick] (\i:2) -- (\i+60:2);
  }
  \draw[dashed,thick] (120:2)++(60:2.1) -- (120:2); 
  \draw[dashed,thick] (120:2)++(180:2.1) -- (120:2);
  \draw[] (120:2)++(120:3) node[rotate=-60]{\footnotesize $\ldots4\ldots$ \ };
\end{scope}
\begin{scope}[xshift=4cm,yshift=6.928cm] %5
  \foreach \i in {0,60,...,300} {
    \draw[thick] (\i:2) -- (\i+60:2);
  }
\end{scope}
\begin{scope}[xshift=6cm,yshift=3.464cm] %6
  \foreach \i in {0,60,...,300} {
    \draw[thick] (\i:2) -- (\i+60:2);
  }
\end{scope} 
\end{scope}

\begin{scope}[scale=0.2,xshift=12cm,yshift=20.784cm] %5
\begin{scope} %1
  \foreach \i in {0,60,...,300} {
    \draw[thick] (\i:2) -- (\i+60:2);
  }
\end{scope}
\begin{scope}[xshift=4cm] %2
  \foreach \i in {0,60,...,300} {
    \draw[thick] (\i:2) -- (\i+60:2);
  }
\end{scope}
\begin{scope}[xshift=-2cm,yshift=3.464cm] %3
  \foreach \i in {0,60,...,300} {
    \draw[thick] (\i:2) -- (\i+60:2);
  }
\end{scope}
\begin{scope}[yshift=6.928cm] %4
  \foreach \i in {0,60,...,300} {
    \draw[thick] (\i:2) -- (\i+60:2);
  }
\end{scope}
\begin{scope}[xshift=4cm,yshift=6.928cm] %5
  \foreach \i in {0,60,...,300} {
    \draw[thick] (\i:2) -- (\i+60:2);
  }
  \draw[dashed,thick] (60:2)++(0:2.1) -- (60:2); 
  \draw[dashed,thick] (60:2)++(120:2.1) -- (60:2);
  \draw[] (60:2)++(60:3) node[rotate=60]{\footnotesize $\ldots5\ldots$ \ };
\end{scope}
\begin{scope}[xshift=6cm,yshift=3.464cm] %6
  \foreach \i in {0,60,...,300} {
    \draw[thick] (\i:2) -- (\i+60:2);
  }
\end{scope} 
\end{scope}

\begin{scope}[scale=0.2,xshift=18cm,yshift=10.392cm] %6
\begin{scope} %1
  \foreach \i in {0,60,...,300} {
    \draw[thick] (\i:2) -- (\i+60:2);
  }
  \draw[ForestGreen] (240:2) node[circle,inner sep=1.2pt,fill]{} node[above right]{};
\end{scope}
\begin{scope}[xshift=4cm] %2
  \foreach \i in {0,60,...,300} {
    \draw[thick] (\i:2) -- (\i+60:2);
  }
\end{scope}
\begin{scope}[xshift=-2cm,yshift=3.464cm] %3
  \foreach \i in {0,60,...,300} {
    \draw[thick] (\i:2) -- (\i+60:2);
  }
\end{scope}
\begin{scope}[yshift=6.928cm] %4
  \foreach \i in {0,60,...,300} {
    \draw[thick] (\i:2) -- (\i+60:2);
  }
\end{scope}
\begin{scope}[xshift=4cm,yshift=6.928cm] %5
  \foreach \i in {0,60,...,300} {
    \draw[thick] (\i:2) -- (\i+60:2);
  }
\end{scope}
\begin{scope}[xshift=6cm,yshift=3.464cm] %6
  \foreach \i in {0,60,...,300} {
    \draw[thick] (\i:2) -- (\i+60:2);
  }
  \draw[dashed,thick] (0:2)++(300:2.1) -- (0:2);
  \draw[dashed,thick] (0:2)++(60:2.1) -- (0:2); 
  \draw[] (0:2)++(0:3) node[]{\footnotesize $\ldots4\ldots$ \ };
\end{scope} 
\end{scope}
\end{tikzpicture}
    \caption{A portion of the graph $\Gamma_{1\overline{54}}^{(6)}$.}
    \label{fig:limit-graph}
\end{figure}

\section{Isomorphism classification} \label{sec:2}
Consider the $r$-cycle graph $P_r$ and its isometry group given by the dihedral group $D_r$ seen as a subgroup of $\mathrm{Sym}(V_r)$. Given an infinite sequence $\xi=x_1x_2\cdots$ in $V_r^{\infty}$, we define the action of $\sigma \in D_r$ on $V_r^{\infty}$ by
$$
\sigma(\xi)=\sigma(x_1)\sigma(x_2)\cdots.
$$

\begin{remark} \label{rmk:dihedral}
It is easy to see that this induces an isomorphism $ (\Gamma_k^{(r)},\xi[k]) \rightarrow (\Gamma_k^{(r)},\sigma(\xi[k]))$ of pointed graphs and, in fact, the group of all isomorphisms of $(\Gamma_k^{(r)},\xi[k])\rightarrow (\Gamma_k^{(r)},\sigma(\xi[k]))$ is isomorphic to $D_r$. To better visualize this, one can think of the permutations of the $r$ copies of $\Gamma_{k-1}$, which would provide exactly $D_r$ and then easily verify that, for compatibility, copies of $\Gamma_{k'}$ with $k' < k-1$ do not play a role in the definition of further isomorphisms.
\end{remark}

Two infinite sequences $\xi=x_1x_2\cdots$ and $\eta=y_1y_2\cdots$ are \textbf{cofinal} if there exists an $N\in \mathbb{N}$ such that $x_i=y_i$ for any $i\geq N$. Being cofinal defines an equivalence relation and we write $\xi \sim \eta$.\
With a slight abuse of notation let us regard an infinite sequence as a vertex of a limit graph $\Gamma_\xi$.\

As already mentioned above, the following result is shared with infinite Schreier graphs of bounded automata groups.

\begin{lemma} \label{lem:in}
The sequence $\eta$ represents a vertex in $\Gamma_\xi$ if and only if $\eta\sim \xi$. Therefore, in this case, we have $\Gamma_\xi = \Gamma_\eta$.
\end{lemma}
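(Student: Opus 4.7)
The plan is to identify ``$\eta$ represents a vertex of $\Gamma_\xi$'' with the condition that the distance $d_{\Gamma_k}(\xi[k],\eta[k])$ stays bounded as $k\to\infty$ (this is forced by the definition of strong Gromov--Hausdorff convergence), and then to show that bounded distance is equivalent to cofinality.

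For the ``if'' direction, suppose $\eta$ and $\xi$ agree on all positions past $N$. Then for every $k\geq N$, the prefixes $\xi[k]$ and $\eta[k]$ share the common suffix $x_{N+1}\cdots x_k$. Unwinding the recursive description of $\Gamma_k$, this common suffix forces both vertices to lie inside the same embedded copy of $\Gamma_N$ in $\Gamma_k$, namely the one obtained by successively entering the $x_k$-th, $x_{k-1}$-th, \ldots, $x_{N+1}$-st copy at each level. Hence $d_{\Gamma_k}(\xi[k],\eta[k])\leq \diam(\Gamma_N)$ uniformly in $k$. Strong Gromov--Hausdorff convergence then provides, for all sufficiently large $k$, a basepoint-preserving isometry $B_k(\xi[k],\diam(\Gamma_N))\to B(\xi,\diam(\Gamma_N))$ which sends $\eta[k]$ to a vertex of $\Gamma_\xi$; well-definedness across different $k$ follows from the compatibility of these isometries with the inclusions $\Gamma_k\hookrightarrow \Gamma_{k+1}$ through the $x_{k+1}$-st copy, and this is the vertex we identify with $\eta$.

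For the converse, assume $\eta$ labels a vertex of $\Gamma_\xi$ at distance $L$ from $\xi$. The same isometries give $d_{\Gamma_k}(\xi[k],\eta[k])\leq L$ for all large $k$. Suppose by contradiction $\eta\not\sim\xi$, so $x_k\neq y_k$ for arbitrarily large $k$. At any such $k$, the vertices $\xi[k]$ and $\eta[k]$ sit in distinct top-level copies of $\Gamma_{k-1}$ in $\Gamma_k$, and every path between them must cross a gluing vertex of the copy containing $\xi[k]$. Since copies of $\Gamma_m$ embed isometrically in $\Gamma_k$ for $m\leq k$ (any detour through neighbouring copies must cross two cut points and so cannot shorten any path), the distance can be read off one scale at a time; using the explicit form $(i+f(r))^{k-1}i$ and $(i+\widetilde{f}(r))^{k-1}i$ of the gluing labels, one shows that the distance from $\xi[k]$ to the nearest gluing vertex of its ambient copy grows without bound as infinitely many disagreements accumulate, contradicting the uniform bound $L$.

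The main technical obstacle is precisely this lower bound in the converse: one must argue rigorously that the hierarchy of cut-point gluings forces distances to blow up whenever $\eta$ disagrees with $\xi$ at infinitely many levels. The cleanest route is to first establish the isometric-embedding property for copies (a short induction on $k$), which reduces the computation to bounding, scale by scale, the distance between $\xi[k]$ and the relevant gluing vertex of its copy. Once the equivalence is in place, the final assertion $\Gamma_\xi=\Gamma_\eta$ is immediate: since cofinality is an equivalence relation, the vertex sets of $\Gamma_\xi$ and $\Gamma_\eta$ are both equal to the common cofinality class of $\xi$, and the edge structures agree because they are both inherited from the same finite graphs $\Gamma_k$.
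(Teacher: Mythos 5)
Your ``if'' direction is exactly the paper's proof: a common tail past position $N$ traps $\xi[k]$ and $\eta[k]$ in the same embedded copy of $\Gamma_N$, giving the uniform bound $\diam(\Gamma_N)$, after which strong Gromov--Hausdorff convergence realizes $\eta$ as a vertex of $\Gamma_\xi$. Note that the paper stops there --- it gives no argument at all for the converse --- so your attempt at the ``only if'' direction is a genuine addition rather than a rephrasing, and your closing argument for $\Gamma_\xi=\Gamma_\eta$ is fine.

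The converse, however, still has the gap you yourself flag, and your one-sentence plan for closing it omits the idea that actually makes the distances diverge. Reducing to ``the distance from $\xi[k]$ to the gluing vertices of its top-level copy'' is the right move, but you must then explain why that quantity is unbounded in $k$. The key observation is that these gluing vertices are the constant-prefix words $(x_k+f(r))^{k-1}x_k$ and $(x_k+\widetilde{f}(r))^{k-1}x_k$, i.e.\ corners $c^{k-1}$ of $\Gamma_{k-1}$ with $c\in\{x_k+f(r),x_k+\widetilde{f}(r)\}$; the same cut-point recursion shows that $\dist_{\Gamma_{k-1}}(\xi[k-1],c^{k-1})\le C$ forces $x_m=c$ for all $m$ between a threshold $M_C$ (depending on $C$ but not on $k$) and $k-1$. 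If the distance stayed $\le C$ at two levels $k<k'$ far beyond $M_C$, one would get $x_k=c$ and simultaneously $c\equiv x_k+f(r)$ or $c\equiv x_k+\widetilde{f}(r)\pmod r$, which is impossible since neither $f(r)$ nor $\widetilde{f}(r)$ vanishes mod $r$. Some argument of this shape is needed; as written, ``grows without bound as infinitely many disagreements accumulate'' is an assertion, not a proof. Two smaller repairs: when $x_k\neq y_k$ the vertex $\eta[k]$ may itself \emph{be} a gluing vertex of the copy containing $\xi[k]$ (gluing vertices carry two distinct length-$k$ names), so ``every path must cross a gluing vertex'' should be replaced by the inequality that $\dist(\xi[k],\eta[k])$ is at least the minimum of the distances from $\xi[k]$ to its copy's two gluing vertices; and ``any detour crosses two cut points'' undersells the isometric-embedding claim, since a detour around a hole crosses the gluing vertices of all $r-1$ other copies and the length comparison there is precisely where the hypothesis $4f(r)>r$ enters.
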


\begin{proof}
Suppose that $\eta\sim \xi$ and $\xi=x_{1} x_{2} \ldots$, that is for some $\widetilde{k}$ we have $\eta=y_1\ldots y_{\widetilde{k}} x_{\widetilde{k}+1}\ldots$. Informally, this is equivalent to say that, after a finite amount of steps, we are always picking the same copy where to be. More precisely, by construction of the graphs, $\eta[k]$ is contained in a ball of $\Gamma_{\xi[k]}$ centered in $\xi[k]$ with radius $\diam(\Gamma_{\widetilde{k}})$ for all $k\geq \widetilde{k}$. Hence $\eta$ belongs to $\Gamma_{\xi}$ by Gromov-Hausdorff convergence. 
\end{proof}

We have remarked that to define the limit graph we must consider it in the space of pointed graphs. Once we have the infinite (pointed) graph we might forget the distinguished vertex and look at the structure of the graph.

We are interested in the isomorphism problem of such infinite graphs. \\

\begin{lemma}\label{lem:iso}
The map $\sigma: (\Gamma_{\xi},\xi) \rightarrow (\Gamma_{\sigma(\xi)},\sigma(\xi)) $ is an isomorphism of pointed graphs. Moreover, if  $\phi$ is an isomorphisms between two pointed graphs, then $\phi=\sigma$ for some $\sigma \in D_{r}$.
\end{lemma}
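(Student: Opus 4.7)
The plan is to derive both assertions from the finite-level classification in \cref{rmk:dihedral} combined with the strong Gromov--Hausdorff convergence $(\Gamma_k, \xi[k]) \to (\Gamma_\xi, \xi)$.

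For the existence part, at each finite level $k$ the coordinatewise action of $\sigma \in D_r$ provides, by \cref{rmk:dihedral}, a pointed isomorphism $\sigma_k : (\Gamma_k, \xi[k]) \to (\Gamma_k, \sigma(\xi[k]))$. Because the action is coordinatewise, $\sigma_{k+1}$ restricted to the copy of $\Gamma_k$ inside $\Gamma_{k+1}$ containing $\xi[k+1]$ agrees, up to relabeling the copy from $x_{k+1}$ to $\sigma(x_{k+1})$, with $\sigma_k$. The family $\{\sigma_k\}$ is therefore compatible with the direct system coming from strong Gromov--Hausdorff convergence, and passes to a well-defined pointed graph isomorphism $\sigma : (\Gamma_\xi, \xi) \to (\Gamma_{\sigma(\xi)}, \sigma(\xi))$.

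For the uniqueness part, let $\phi : (\Gamma_\xi, \xi) \to (\Gamma_\eta, \eta)$ be a pointed graph isomorphism; it is in particular an isometry for the path metric and restricts to pointed isometries $\phi_\ell : B(\xi, \ell) \to B(\eta, \ell)$ for every $\ell > 0$. Fix $k$ and choose $\ell \geq \diam(\Gamma_k)$. Strong Gromov--Hausdorff convergence of the two sequences $(\Gamma_k, \xi[k]) \to (\Gamma_\xi, \xi)$ and $(\Gamma_k, \eta[k]) \to (\Gamma_\eta, \eta)$ lets me identify $B(\xi, \ell)$ and $B(\eta, \ell)$ pointedly with $(\Gamma_k, \xi[k])$ and $(\Gamma_k, \eta[k])$; transporting $\phi_\ell$ through these identifications produces a pointed graph isomorphism $(\Gamma_k, \xi[k]) \to (\Gamma_k, \eta[k])$, which by \cref{rmk:dihedral} is induced by a unique element $\sigma_k \in D_r$.

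Consistency across levels finishes the argument. Since $\phi$ is a single globally defined map, $\sigma_{k+1}$ restricted to the copy of $\Gamma_k$ containing $\xi[k+1]$ must realize $\sigma_k$; because the $D_r$-action on any $\Gamma_k$ is coordinatewise, and hence determined by its action on the $r$-element alphabet $V_r$, this forces $\sigma_k = \sigma_{k+1}$ in $D_r$ for every $k$. Calling this common element $\sigma$, the map $\phi$ coincides with the $\sigma$-induced isomorphism on every finite-level subgraph $\Gamma_k$, and hence on the entire limit $\Gamma_\xi$; in particular $\eta = \sigma(\xi)$ coordinatewise, so $\phi = \sigma$. The main obstacle I anticipate is the careful verification that the coordinatewise $D_r$-action is compatible with the non-trivial gluing identifications between consecutive copies introduced in \cref{sec:1}; this is what makes the finite-level classification in \cref{rmk:dihedral} available and also underlies the consistency step above, after which the direct-limit bookkeeping is routine.
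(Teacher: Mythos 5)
Your argument is correct and follows essentially the same route as the paper: the existence part passes the finite-level isomorphisms $\sigma_k$ from \cref{rmk:dihedral} through the strong Gromov--Hausdorff limit, and the uniqueness part restricts $\phi$ to balls, transports them to pointed isomorphisms $(\Gamma_k,\xi[k])\to(\Gamma_k,\eta[k])$, invokes \cref{rmk:dihedral} to get $\sigma_k\in D_r$, and uses compatibility across levels to identify all the $\sigma_k$. The only imprecision is your identification of $B(\xi,\ell)$ itself with $(\Gamma_k,\xi[k])$ (the ball properly contains that copy, so one should restrict to the embedded copy of $\Gamma_k$, as the paper does), but this does not affect the argument.
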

\begin{proof}
The first part follows directly by definition, $\phi=\sigma$ is the limit of $\{\phi_k\}_{k=1}^{\infty}$ where $\phi_{k}:  (\Gamma_k, \xi[k]) \rightarrow  (\Gamma_k, \sigma(\xi)[k])$ is the pointed isometry defined by $\sigma$. By definition of Gromov-Hausdorff convergence, we conclude that $\phi$ is an isometry of the limits. \\

For the second part, let $\phi: (\Gamma_{\xi},\xi) \rightarrow (\Gamma_{\eta},\eta) $ be a pointed graph isomorphism. This means that $B(\xi,\ell)$ is isomorphic to $B(\eta,\ell)$ for all $\ell>0$. 
Fix an $\ell>0$ then, by the Gromov-Hausdorff convergence there exists $L$ such that for all $m \geq L$ we have $B_m(\xi[m],\ell) \simeq B(\xi,\ell)$ and $B_m(\eta[m],\ell) \simeq B(\eta,\ell)$. Composing all the pointed isomorphisms, we have an isomorphism $\widehat{\phi}_{\ell,m}:B_m(\xi[m],\ell) \rightarrow B_m(\eta[m],\ell)$. Note that these pointed isomorphisms are each compatible to one another, i.e. the restriction of $\phi_{\ell,m}$ to the ball $B_m(\xi[m],\ell-1)$ is equivalent to $\phi_{\ell-1,m}$ and the restriction of $\phi_{\ell,m}$ to the isomorphic copy of $B_{m-1}(\xi[m-1],\ell)$ embedded in $B_m(\xi[m],\ell)$ is equivalent to $\phi_{\ell,m-1}$. Now an isomorphic copy of $(\Gamma_k,\xi[k])$ is contained in $B_m(\xi[m],\ell)$ for some suitable $k$ and the family of pointed isomorphisms $\widehat{\phi}_{\ell,m}$ restricted to each $(\Gamma_k,\xi[k])$ provides a family of pointed isomorphisms $\widehat{\phi}_{k}:(\Gamma_k,\xi[k]) \rightarrow (\Gamma_k,\eta[k])$, so by \cref{rmk:dihedral} we have $\widehat{\phi}_{k}=\sigma$ for some $\sigma \in D_r$. 
Compatibility ensure that the isomorphism $\widehat{\phi}_{k}$ is induced by the same $\sigma$ for all $k$. Arguing as in the first part, we get that $\phi=\sigma$ as requested.
\end{proof}
\begin{theorem}\label{thm:iso}
Two graphs $\Gamma_\xi$ and $\Gamma_\eta$ are isomorphic if and only if there exists $\sigma\in D_r$ such that $\eta\sim \sigma(\xi)$.
\end{theorem}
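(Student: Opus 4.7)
The plan is to reduce \cref{thm:iso} to the already-proved pointed version \cref{lem:iso} by carefully choosing basepoints, using \cref{lem:in} to move the distinguished vertex when needed.

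For the sufficiency direction, I would assume $\eta \sim \sigma(\xi)$ for some $\sigma \in D_r$. \cref{lem:in} then says that $\sigma(\xi)$ and $\eta$ represent vertices of the same limit graph, so $\Gamma_\eta = \Gamma_{\sigma(\xi)}$ as unpointed graphs. The first part of \cref{lem:iso} provides a pointed isomorphism $(\Gamma_\xi,\xi) \to (\Gamma_{\sigma(\xi)},\sigma(\xi))$, and forgetting the basepoint gives the desired graph isomorphism $\Gamma_\xi \cong \Gamma_\eta$.

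For the necessity direction, suppose $\phi: \Gamma_\xi \to \Gamma_\eta$ is a graph isomorphism, not a priori pointed. I would consider the image vertex $\phi(\xi) \in \Gamma_\eta$. Since every vertex of $\Gamma_\eta$ is, by \cref{lem:in}, an infinite sequence cofinal to $\eta$, there exists $\eta' \sim \eta$ such that $\phi(\xi) = \eta'$ and $\Gamma_\eta = \Gamma_{\eta'}$ as unpointed graphs. Then $\phi$ can be regarded as a pointed isomorphism $(\Gamma_\xi,\xi) \to (\Gamma_{\eta'},\eta')$, and the second part of \cref{lem:iso} produces a $\sigma \in D_r$ with $\phi = \sigma$; in particular $\sigma(\xi) = \eta'$, hence $\sigma(\xi) \sim \eta$, as required.

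The only real subtlety in this plan is the observation that, after conjugating by the cofinality equivalence, the non-pointed isomorphism becomes pointed, so that \cref{lem:iso} applies verbatim; this is precisely where \cref{lem:in} (identifying vertices of $\Gamma_\eta$ with sequences cofinal to $\eta$) is essential. Everything else is a bookkeeping step, since the heavy lifting—the classification of pointed isomorphisms by elements of $D_r$—has already been handled in \cref{rmk:dihedral} and \cref{lem:iso}.
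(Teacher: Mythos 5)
Your proposal is correct and follows essentially the same route as the paper: both directions are reduced to \cref{lem:iso} and \cref{lem:in} in exactly the way the paper does it. The only difference is that you spell out the basepoint bookkeeping in the necessity direction (re-pointing $\Gamma_\eta$ at $\phi(\xi)=\eta'\sim\eta$ before invoking the pointed classification), a step the paper's proof leaves implicit.
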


\begin{proof}

By \cref{lem:iso}, the isomorphism between $\Gamma_\xi$ and $\Gamma_\eta $ is described by some $\sigma \in D_{r}$. Hence $\sigma(\xi) \in \Gamma_\eta $ and by \cref{lem:in} we conclude that $\sigma(\xi) \sim \eta$. \
On the other hand, suppose that $\sigma$ is such that $\sigma(\xi) \sim \eta$. Again by \cref{lem:iso}, the element $\sigma$ induces an isomorphism between $\Gamma_\xi $ and $\Gamma_{\sigma(\xi)} $. Finally, we have $\Gamma_{\sigma(\xi)} = \Gamma_\eta $ by \cref{lem:in}.
\end{proof}

\section{Horofunction boundary} \label{sec:3}
We need to recall some definitions and basic results. We fix a locally finite graph $X$ endowed with the standard path metric $\dist$ which identifies an edge with the unit interval.\

Let $C(X,\mathbb{Z})=\mathbb{Z}^X$ be the abelian group of all integer-valued function on $X$, which we endow with the product topology and where $\mathbb{Z}$ has the discrete topology.
Let us fix a base vertex $x_0 \in X$. For each vertex $v\in X$, we define
$$
\overline{\dist}_v(x) := \dist(x,v) - \dist(x_0,v), \qquad \forall x \in X.
$$
There is a well-defined embedding $\iota:X \hookrightarrow C(X,\mathbb{Z})$, defined as $\iota(v):=\overline{\dist}_v$. We denote by $\overline{\iota(X)}$ the closure of $\iota(X)$ in $C(X,\mathbb{Z})$.

\begin{definition}
The space $\overline{\iota(X)}$ is called the \textbf{horofunction compactification} of $X$. The \textbf{horofunction boundary} of $X$ is the set $\overline{\iota(X)}\setminus \iota(X)$ and it is denoted by $\partial_h X$.
\end{definition}

We will refer to elements of $\partial_h X$ simply as horofunctions. \\

We want a geometric description of the horofunctions of the graph. We need the following definition (see \cite{Rieffel02} for the general definition in metric spaces).
\begin{definition}\label{def:geodesic}
Let $X$ be a locally finite graph, and let $T$ be an unbounded subset of $\mathbb{N}$ containing $0$. Let $\gamma: T \to X$ such that $\gamma(t)$ is a vertex for each $t \in T$ and such that $\dist(\gamma(t),\gamma(0))=t$. Then
\begin{enumerate}
\item $\gamma$ is a \textbf{geodesic ray} if $\dist(\gamma(t), \gamma(s)) = |t - s|$, for all $t, s \in T$;
\item $\gamma$ is an \textbf{almost-geodesic ray} if there exists an integer $N$ such that, for every $t, s \in T$
with $t \geq s \geq N$, one has:
$$
|\dist(\gamma(t), \gamma(s)) + \dist(\gamma(s), \gamma(0)) - t| =0;
$$
\item $\gamma$ is a \textbf{weakly-geodesic ray} if, for every $y \in X$, 
there exists an integer $N$ such that, for every $s, t \geq N$, one has:
$$
|\dist(\gamma(t), y) - \dist(\gamma(s), y) - (t - s)| = 0.
$$
\end{enumerate}
\end{definition}

It is easy to see that the following implications hold
$$\text{geodesic ray } \Longrightarrow \text{ almost-geodesic ray } \Longrightarrow \text{ weakly-geodesic ray.}$$

\noindent The following result is a reformulation of \cite[Theorem 4.7]{Rieffel02} in the context of locally finite graphs.

\begin{theorem}
Let $X$ be a locally finite graph and let $\gamma$ be a weakly-geodesic ray in $X$. Then $\lim_{t\to +\infty} \overline{\dist}_{\gamma(t)}(y)$ pointwise converges to a horofunction.  Conversely, every horofunction is a pointwise limit of a weakly-geodesic ray.
\end{theorem}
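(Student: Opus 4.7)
The theorem has two directions, both exploiting the fact that in a locally finite graph distances are integers, so convergence in $C(X,\Z) = \Z^X$ with the product topology is nothing but pointwise \emph{eventual equality}. For the forward direction, I plan to fix $y \in X$ and apply the weakly-geodesic condition to both $y$ and the basepoint $x_0$. This yields an $N$ so that, for all $s, t \in T$ with $s, t \geq N$,
$$\dist(\gamma(t), y) - \dist(\gamma(s), y) = t - s = \dist(\gamma(t), x_0) - \dist(\gamma(s), x_0);$$
subtracting gives $\overline{\dist}_{\gamma(t)}(y) = \overline{\dist}_{\gamma(s)}(y)$, so $h(y) := \lim_t \overline{\dist}_{\gamma(t)}(y)$ exists, $\iota(\gamma(t)) \to h$ in $C(X,\Z)$, and $h \in \overline{\iota(X)}$.

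To prove $h \notin \iota(X)$, I will argue by contradiction, supposing $h = \iota(w)$. Since $\dist(\gamma(t), \gamma(0)) = t \to \infty$, the triangle inequality yields $\dist(x_0, \gamma(t)) \to \infty$. Evaluating eventual equality at $w$ and at each of the finitely many neighbors $z_1, \ldots, z_k$ of $w$ (local finiteness is essential here), for $t$ large one obtains
$$\dist(w, \gamma(t)) = \dist(x_0, \gamma(t)) - \dist(x_0, w) \quad \text{and} \quad \dist(z_i, \gamma(t)) = \dist(w, \gamma(t)) + 1.$$
But $\dist(w, \gamma(t)) \to \infty$, so $w \neq \gamma(t)$ for $t$ large, and some neighbor $z$ of $w$ lies on a shortest path from $w$ to $\gamma(t)$, giving $\dist(z, \gamma(t)) = \dist(w, \gamma(t)) - 1$, which contradicts the previous display.

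For the converse, given $h \in \partial_h X$, I take a sequence $(v_n)$ in $X$ with $\iota(v_n) \to h$. Discreteness gives eventual equality $\overline{\dist}_{v_n}(y) = h(y)$ for each fixed $y$. Since $h \notin \iota(X)$ and $\iota$ is injective, no subsequence of $(v_n)$ can be eventually constant; combined with local finiteness this forces $\dist(x_0, v_n) \to \infty$ along a subsequence. Passing once more to make $t_n := \dist(x_0, v_n)$ strictly increasing, I set $T := \{0\} \cup \{t_n\}$, $\gamma(0) := x_0$, $\gamma(t_n) := v_n$. The eventual equality rewrites as $\dist(\gamma(t_n), y) - \dist(\gamma(t_m), y) = t_n - t_m$ for large $n, m$, which is precisely the weakly-geodesic condition, and by construction $\overline{\dist}_{\gamma(t_n)} \to h$.

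The main obstacle is the verification that $h \notin \iota(X)$ in the forward direction: pointwise convergence in the product topology is weak enough that a priori it does not preclude accumulation on a vertex, and the delicate neighbor-based argument above, driven by local finiteness, is precisely what rules this out.
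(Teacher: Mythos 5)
Your proof is correct. Note, however, that the paper does not actually prove this statement: it is presented as a reformulation of Rieffel's Theorem 4.7 and is simply cited, with Rieffel's argument living in the much more general setting of locally compact metric spaces (nets, $\varepsilon$-versions of the almost- and weakly-geodesic conditions, and a compactness argument in the uniform-on-compacta topology). Your argument is a genuinely different, self-contained route that exploits the two special features of the graph setting: integrality of $\dist$, which turns convergence in $\mathbb{Z}^X$ into pointwise eventual equality, and local finiteness, which you use twice (to rule out $h=\iota(w)$ via the finitely many neighbours of $w$, and to extract $\dist(x_0,v_n)\to\infty$ from the absence of a constant subsequence). Both uses are sound: in the neighbour argument the first vertex $z$ on a geodesic from $w$ to $\gamma(t)$ depends on $t$, but since $w$ has finitely many neighbours you may take $t$ large enough that the eventual equality $\dist(z_i,\gamma(t))=\dist(w,\gamma(t))+1$ holds at all of them simultaneously, and the contradiction with $\dist(z,\gamma(t))=\dist(w,\gamma(t))-1$ goes through. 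In the converse you should perhaps say a word on why a \emph{sequence} $(v_n)$ with $\iota(v_n)\to h$ exists (the closure is a priori topological): a connected locally finite graph is countable, so $\mathbb{Z}^X$ is metrizable and sequential closure suffices. What your approach buys is a short elementary proof with no appeal to the general metric-space machinery; what the citation buys the authors is brevity and the assurance that the statement is a special case of a known result.
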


The previous theorem helps us describing horofunctions from a more geometric point of view. We are now ready to give the last definition.
\begin{definition}
A horofunction defined by an almost-geodesic ray is called a \textbf{Busemann point}.
\end{definition}

In order to study horofunctions in our self-similar graphs, we will need to consider \textit{holes} and \textit{cut points}. We start with the following.

\begin{definition}
Let $\xi \in V_r^{\infty}$. We say that $\Gamma_\xi$ \textbf{grows away from} $j \in \{1, \ldots, r\}$ if $j$ appears infinitely many times in the infinite sequence $\xi$. 
\end{definition}

Note that this notion is well-defined for a graph. In fact, due to \cref{lem:in}, it is independent of the choice of $\xi$.

\begin{definition}

A $k$-\textbf{hole} is the subgraph of $\Gamma_k$ isomorphic to the $k$-iteration of a generalized Koch snowflake defining the central hole
of the graph. A bit more formally, such a subgraph is obtained by joining the gluing vertices with suitable curves made by replacing the edges of the $(k-1)$-hole with geodesics between their endpoints in the graph $\Gamma_{k-1}$.
\end{definition}
A formal and explicit definition of such a curve exists, but it is rather technical and not relevant for our purposes (see \cref{fig:hole} for graphical aid).
A copy of $\Gamma_{k-1}$ is said to \textbf{facing} a $k$-hole if its intersection with the hole is non trivial. A \textbf{hole} $H$ in $\Gamma_\xi$ is the $k$-hole of $\Gamma_{\xi[k]}$ for some $k \in \mathbb{N}$.\\

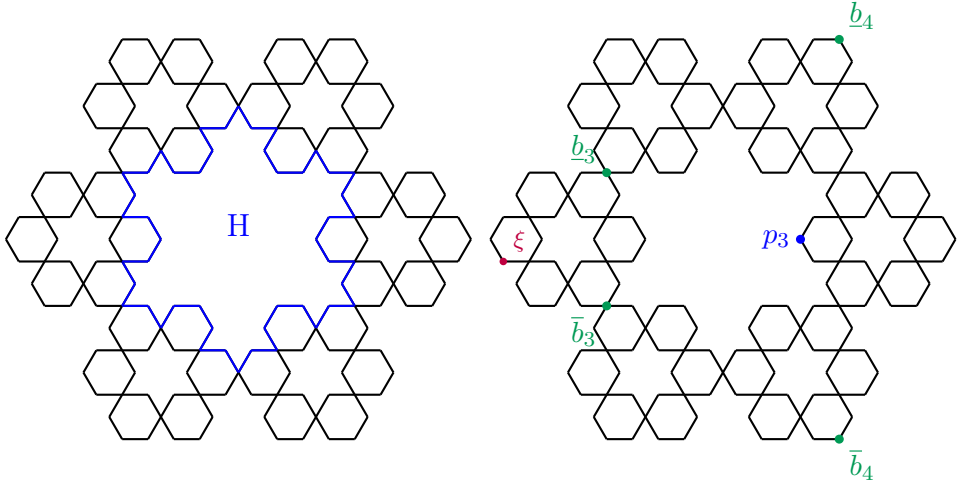
\begin{figure}
    \centering
\begin{tikzpicture}
\begin{scope}[scale=0.85]
\begin{scope}[scale=0.2]
\begin{scope}[xshift=8cm,yshift=15cm]
\draw[blue] (0,0) node[]{\large H};
\end{scope}
\begin{scope} %1
  \foreach \i in {0,60,...,300} {
    \draw[thick] (\i:2) -- (\i+60:2);
  }
\end{scope}
\begin{scope}[xshift=4cm] %2
  \foreach \i in {0,60,...,300} {
    \draw[thick] (\i:2) -- (\i+60:2);
  }
\end{scope}
\begin{scope}[xshift=-2cm,yshift=3.464cm] %3
  \foreach \i in {0,60,...,300} {
    \draw[thick] (\i:2) -- (\i+60:2);
  }
\end{scope}
\begin{scope}[yshift=6.928cm] %4
  \foreach \i in {0,60,...,300} {
    \draw[thick] (\i:2) -- (\i+60:2);
  }
  \draw[thick,blue] (120:2) -- (60:2);
  \draw[thick,blue] (0:2) -- (60:2);
\end{scope}
\begin{scope}[xshift=4cm,yshift=6.928cm] %5
  \foreach \i in {0,60,...,300} {
    \draw[thick] (\i:2) -- (\i+60:2);
  }
  \draw[thick,blue] (120:2) -- (60:2);
  \draw[thick,blue] (0:2) -- (60:2);
  \draw[thick,blue] (0:2) -- (300:2);
  \draw[thick,blue] (120:2) -- (180:2);
\end{scope}
\begin{scope}[xshift=6cm,yshift=3.464cm] %6
  \foreach \i in {0,60,...,300} {
    \draw[thick] (\i:2) -- (\i+60:2);
  }
  \draw[thick,blue] (120:2) -- (60:2);
  \draw[thick,blue] (0:2) -- (60:2);
\end{scope} 
\end{scope}

\begin{scope}[scale=0.2,xshift=12cm] %2
\begin{scope} %1
  \foreach \i in {0,60,...,300} {
    \draw[thick] (\i:2) -- (\i+60:2);
  }
\end{scope}
\begin{scope}[xshift=4cm] %2
  \foreach \i in {0,60,...,300} {
    \draw[thick] (\i:2) -- (\i+60:2);
  }
\end{scope}
\begin{scope}[xshift=-2cm,yshift=3.464cm] %3
  \foreach \i in {0,60,...,300} {
    \draw[thick] (\i:2) -- (\i+60:2);
  }
  \draw[thick,blue] (120:2) -- (180:2);
  \draw[thick,blue] (120:2) -- (60:2);
\end{scope}
\begin{scope}[yshift=6.928cm] %4
  \foreach \i in {0,60,...,300} {
    \draw[thick] (\i:2) -- (\i+60:2);
  }
  \draw[thick,blue] (180:2) -- (240:2);
  \draw[thick,blue] (120:2) -- (180:2);
  \draw[thick,blue] (120:2) -- (60:2);
  \draw[thick,blue] (0:2) -- (60:2);
\end{scope}
\begin{scope}[xshift=4cm,yshift=6.928cm] %5
  \foreach \i in {0,60,...,300} {
    \draw[thick] (\i:2) -- (\i+60:2);
  }
  \draw[thick,blue] (120:2) -- (180:2);
  \draw[thick,blue] (120:2) -- (60:2);
\end{scope}
\begin{scope}[xshift=6cm,yshift=3.464cm] %6
  \foreach \i in {0,60,...,300} {
    \draw[thick] (\i:2) -- (\i+60:2);
  }
\end{scope} 
\end{scope}

\begin{scope}[scale=0.2,xshift=-6cm,yshift=10.392cm] %3
\begin{scope} %1
  \foreach \i in {0,60,...,300} {
    \draw[thick] (\i:2) -- (\i+60:2);
  }
\end{scope}
\begin{scope}[xshift=4cm] %2
  \foreach \i in {0,60,...,300} {
    \draw[thick] (\i:2) -- (\i+60:2);
  }
  \draw[thick,blue] (0:2) -- (60:2);
  \draw[thick,blue] (0:2) -- (300:2);
\end{scope}
\begin{scope}[xshift=-2cm,yshift=3.464cm] %3
  \foreach \i in {0,60,...,300} {
    \draw[thick] (\i:2) -- (\i+60:2);
  }
\end{scope}
\begin{scope}[yshift=6.928cm] %4
  \foreach \i in {0,60,...,300} {
    \draw[thick] (\i:2) -- (\i+60:2);
  }
\end{scope}
\begin{scope}[xshift=4cm,yshift=6.928cm] %5
  \foreach \i in {0,60,...,300} {
    \draw[thick] (\i:2) -- (\i+60:2);
  }
  \draw[thick,blue] (0:2) -- (60:2);
  \draw[thick,blue] (0:2) -- (300:2);
\end{scope}
\begin{scope}[xshift=6cm,yshift=3.464cm] %6
  \foreach \i in {0,60,...,300} {
    \draw[thick] (\i:2) -- (\i+60:2);
  }
  \draw[thick,blue] (120:2) -- (60:2);
  \draw[thick,blue] (0:2) -- (60:2);
  \draw[thick,blue] (0:2) -- (300:2);
  \draw[thick,blue] (300:2) -- (240:2);
\end{scope} 
\end{scope}

\begin{scope}[scale=0.2,yshift=20.784cm] %4
\begin{scope} %1
  \foreach \i in {0,60,...,300} {
    \draw[thick] (\i:2) -- (\i+60:2);
  }
  \draw[thick,blue] (240:2) -- (300:2);
  \draw[thick,blue] (300:2) -- (0:2);
\end{scope}
\begin{scope}[xshift=4cm] %2
  \foreach \i in {0,60,...,300} {
    \draw[thick] (\i:2) -- (\i+60:2);
  }
    \draw[thick,blue] (240:2) -- (300:2);
  \draw[thick,blue] (180:2) -- (240:2);
  \draw[thick,blue] (0:2) -- (60:2);
  \draw[thick,blue] (300:2) -- (0:2);
\end{scope}
\begin{scope}[xshift=-2cm,yshift=3.464cm] %3
  \foreach \i in {0,60,...,300} {
    \draw[thick] (\i:2) -- (\i+60:2);
  }
\end{scope}
\begin{scope}[yshift=6.928cm] %4
  \foreach \i in {0,60,...,300} {
    \draw[thick] (\i:2) -- (\i+60:2);
  }
\end{scope}
\begin{scope}[xshift=4cm,yshift=6.928cm] %5
  \foreach \i in {0,60,...,300} {
    \draw[thick] (\i:2) -- (\i+60:2);
  }
\end{scope}
\begin{scope}[xshift=6cm,yshift=3.464cm] %6
  \foreach \i in {0,60,...,300} {
    \draw[thick] (\i:2) -- (\i+60:2);
  }
  \draw[thick,blue] (240:2) -- (300:2);
  \draw[thick,blue] (300:2) -- (0:2);
\end{scope} 
\end{scope}

\begin{scope}[scale=0.2,xshift=12cm,yshift=20.784cm] %5
\begin{scope} %1
  \foreach \i in {0,60,...,300} {
    \draw[thick] (\i:2) -- (\i+60:2);
  }
  \draw[thick,blue] (120:2) -- (180:2);
  \draw[thick,blue] (180:2) -- (240:2);
  \draw[thick,blue] (240:2) -- (300:2);
  \draw[thick,blue] (300:2) -- (0:2);
\end{scope}
\begin{scope}[xshift=4cm] %2
  \foreach \i in {0,60,...,300} {
    \draw[thick] (\i:2) -- (\i+60:2);
  }
  \draw[thick,blue] (240:2) -- (300:2);
  \draw[thick,blue] (180:2) -- (240:2);
\end{scope}
\begin{scope}[xshift=-2cm,yshift=3.464cm] %3
  \foreach \i in {0,60,...,300} {
    \draw[thick] (\i:2) -- (\i+60:2);
  }
  \draw[thick,blue] (240:2) -- (300:2);
  \draw[thick,blue] (180:2) -- (240:2);
\end{scope}
\begin{scope}[yshift=6.928cm] %4
  \foreach \i in {0,60,...,300} {
    \draw[thick] (\i:2) -- (\i+60:2);
  }
\end{scope}
\begin{scope}[xshift=4cm,yshift=6.928cm] %5
  \foreach \i in {0,60,...,300} {
    \draw[thick] (\i:2) -- (\i+60:2);
  }
\end{scope}
\begin{scope}[xshift=6cm,yshift=3.464cm] %6
  \foreach \i in {0,60,...,300} {
    \draw[thick] (\i:2) -- (\i+60:2);
  }
\end{scope} 
\end{scope}

\begin{scope}[scale=0.2,xshift=18cm,yshift=10.392cm] %6
\begin{scope} %1
  \foreach \i in {0,60,...,300} {
    \draw[thick] (\i:2) -- (\i+60:2);
  }
  \draw[thick,blue] (120:2) -- (180:2);
  \draw[thick,blue] (180:2) -- (240:2);
\end{scope}
\begin{scope}[xshift=4cm] %2
  \foreach \i in {0,60,...,300} {
    \draw[thick] (\i:2) -- (\i+60:2);
  }
\end{scope}
\begin{scope}[xshift=-2cm,yshift=3.464cm] %3
  \foreach \i in {0,60,...,300} {
    \draw[thick] (\i:2) -- (\i+60:2);
  }
  \draw[thick,blue] (60:2) -- (120:2);
  \draw[thick,blue] (180:2) -- (240:2);
  \draw[thick,blue] (120:2) -- (180:2);
  \draw[thick,blue] (240:2) -- (300:2);
\end{scope}
\begin{scope}[yshift=6.928cm] %4
  \foreach \i in {0,60,...,300} {
    \draw[thick] (\i:2) -- (\i+60:2);
  }
  \draw[thick,blue] (120:2) -- (180:2);
  \draw[thick,blue] (180:2) -- (240:2);
\end{scope}
\begin{scope}[xshift=4cm,yshift=6.928cm] %5
  \foreach \i in {0,60,...,300} {
    \draw[thick] (\i:2) -- (\i+60:2);
  }
\end{scope}
\begin{scope}[xshift=6cm,yshift=3.464cm] %6
  \foreach \i in {0,60,...,300} {
    \draw[thick] (\i:2) -- (\i+60:2);
  }
\end{scope} 
\end{scope}

\end{scope}

%%%%%%%%%%%%%%%%%%%%%%%%%%%%%%%%%%%%%%%%%%%%%%%%
\begin{scope}[scale=0.85,xshift=7.5cm]
\begin{scope}[scale=0.2]
\begin{scope} %1
  \foreach \i in {0,60,...,300} {
    \draw[thick] (\i:2) -- (\i+60:2);
  }
\end{scope}
\begin{scope}[xshift=4cm] %2
  \foreach \i in {0,60,...,300} {
    \draw[thick] (\i:2) -- (\i+60:2);
  }
\end{scope}
\begin{scope}[xshift=-2cm,yshift=3.464cm] %3
  \foreach \i in {0,60,...,300} {
    \draw[thick] (\i:2) -- (\i+60:2);
  }
\end{scope}
\begin{scope}[yshift=6.928cm] %4
  \foreach \i in {0,60,...,300} {
    \draw[thick] (\i:2) -- (\i+60:2);
  }
\end{scope}
\begin{scope}[xshift=4cm,yshift=6.928cm] %5
  \foreach \i in {0,60,...,300} {
    \draw[thick] (\i:2) -- (\i+60:2);
  }
\end{scope}
\begin{scope}[xshift=6cm,yshift=3.464cm] %6
  \foreach \i in {0,60,...,300} {
    \draw[thick] (\i:2) -- (\i+60:2);
  }
\end{scope} 
\end{scope}

\begin{scope}[scale=0.2,xshift=12cm] %2
\begin{scope} %1
  \foreach \i in {0,60,...,300} {
    \draw[thick] (\i:2) -- (\i+60:2);
  }
\end{scope}
\begin{scope}[xshift=4cm] %2
  \foreach \i in {0,60,...,300} {
    \draw[thick] (\i:2) -- (\i+60:2);
  }
      \draw[ForestGreen] (300:2) node[circle,inner sep=1.2pt,fill]{} node[below right]{$\overline{b}_{4}$};
\end{scope}
\begin{scope}[xshift=-2cm,yshift=3.464cm] %3
  \foreach \i in {0,60,...,300} {
    \draw[thick] (\i:2) -- (\i+60:2);
  }
\end{scope}
\begin{scope}[yshift=6.928cm] %4
  \foreach \i in {0,60,...,300} {
    \draw[thick] (\i:2) -- (\i+60:2);
  }
\end{scope}
\begin{scope}[xshift=4cm,yshift=6.928cm] %5
  \foreach \i in {0,60,...,300} {
    \draw[thick] (\i:2) -- (\i+60:2);
  }
\end{scope}
\begin{scope}[xshift=6cm,yshift=3.464cm] %6
  \foreach \i in {0,60,...,300} {
    \draw[thick] (\i:2) -- (\i+60:2);
  }
\end{scope} 
\end{scope}

\begin{scope}[scale=0.2,xshift=-6cm,yshift=10.392cm] %3
\begin{scope} %1
  \foreach \i in {0,60,...,300} {
    \draw[thick] (\i:2) -- (\i+60:2);
  }
\end{scope}
\begin{scope}[xshift=4cm] %2
  \foreach \i in {0,60,...,300} {
    \draw[thick] (\i:2) -- (\i+60:2);
  }
    \draw[ForestGreen] (300:2) node[circle,inner sep=1.2pt,fill]{} node[below left]{$\overline{b}_{3}$};
\end{scope}
\begin{scope}[xshift=-2cm,yshift=3.464cm] %3
  \foreach \i in {0,60,...,300} {
    \draw[thick] (\i:2) -- (\i+60:2);
  }
   \draw[purple] (240:2) node[circle,inner sep=1pt,fill]{} node[above right]{\footnotesize $\xi$};
\end{scope}
\begin{scope}[yshift=6.928cm] %4
  \foreach \i in {0,60,...,300} {
    \draw[thick] (\i:2) -- (\i+60:2);
  }
\end{scope}
\begin{scope}[xshift=4cm,yshift=6.928cm] %5
  \foreach \i in {0,60,...,300} {
    \draw[thick] (\i:2) -- (\i+60:2);
  }
\end{scope}
\begin{scope}[xshift=6cm,yshift=3.464cm] %6
  \foreach \i in {0,60,...,300} {
    \draw[thick] (\i:2) -- (\i+60:2);
  }
\end{scope} 
\end{scope}

\begin{scope}[scale=0.2,yshift=20.784cm] %4
\begin{scope} %1
  \foreach \i in {0,60,...,300} {
    \draw[thick] (\i:2) -- (\i+60:2);
  }
 \draw[ForestGreen] (240:2) node[circle,inner sep=1.2pt,fill]{} node[above left]{$\underline{b}_{3}$};
\end{scope}
\begin{scope}[xshift=4cm] %2
  \foreach \i in {0,60,...,300} {
    \draw[thick] (\i:2) -- (\i+60:2);
  }
\end{scope}
\begin{scope}[xshift=-2cm,yshift=3.464cm] %3
  \foreach \i in {0,60,...,300} {
    \draw[thick] (\i:2) -- (\i+60:2);
  }
\end{scope}
\begin{scope}[yshift=6.928cm] %4
  \foreach \i in {0,60,...,300} {
    \draw[thick] (\i:2) -- (\i+60:2);
  }
\end{scope}
\begin{scope}[xshift=4cm,yshift=6.928cm] %5
  \foreach \i in {0,60,...,300} {
    \draw[thick] (\i:2) -- (\i+60:2);
  }
\end{scope}
\begin{scope}[xshift=6cm,yshift=3.464cm] %6
  \foreach \i in {0,60,...,300} {
    \draw[thick] (\i:2) -- (\i+60:2);
  }
\end{scope} 
\end{scope}

\begin{scope}[scale=0.2,xshift=12cm,yshift=20.784cm] %5
\begin{scope} %1
  \foreach \i in {0,60,...,300} {
    \draw[thick] (\i:2) -- (\i+60:2);
  }
\end{scope}
\begin{scope}[xshift=4cm] %2
  \foreach \i in {0,60,...,300} {
    \draw[thick] (\i:2) -- (\i+60:2);
  }
\end{scope}
\begin{scope}[xshift=-2cm,yshift=3.464cm] %3
  \foreach \i in {0,60,...,300} {
    \draw[thick] (\i:2) -- (\i+60:2);
  }
\end{scope}
\begin{scope}[yshift=6.928cm] %4
  \foreach \i in {0,60,...,300} {
    \draw[thick] (\i:2) -- (\i+60:2);
  }
\end{scope}
\begin{scope}[xshift=4cm,yshift=6.928cm] %5
  \foreach \i in {0,60,...,300} {
    \draw[thick] (\i:2) -- (\i+60:2);
  }
      \draw[ForestGreen] (60:2) node[circle,inner sep=1.2pt,fill]{} node[above right]{$\underline{b}_{4}$};
\end{scope}
\begin{scope}[xshift=6cm,yshift=3.464cm] %6
  \foreach \i in {0,60,...,300} {
    \draw[thick] (\i:2) -- (\i+60:2);
  }
\end{scope} 
\end{scope}

\begin{scope}[scale=0.2,xshift=18cm,yshift=10.392cm] %6
\begin{scope} %1
  \foreach \i in {0,60,...,300} {
    \draw[thick] (\i:2) -- (\i+60:2);
  }
\end{scope}
\begin{scope}[xshift=4cm] %2
  \foreach \i in {0,60,...,300} {
    \draw[thick] (\i:2) -- (\i+60:2);
  }
\end{scope}
\begin{scope}[xshift=-2cm,yshift=3.464cm] %3
  \foreach \i in {0,60,...,300} {
    \draw[thick] (\i:2) -- (\i+60:2);
  }
 \draw[blue] (180:2) node[circle,inner sep=1.2pt,fill]{} node[left]{$p_{3}$};
\end{scope}
\begin{scope}[yshift=6.928cm] %4
  \foreach \i in {0,60,...,300} {
    \draw[thick] (\i:2) -- (\i+60:2);
  }
\end{scope}
\begin{scope}[xshift=4cm,yshift=6.928cm] %5
  \foreach \i in {0,60,...,300} {
    \draw[thick] (\i:2) -- (\i+60:2);
  }
\end{scope}
\begin{scope}[xshift=6cm,yshift=3.464cm] %6
  \foreach \i in {0,60,...,300} {
    \draw[thick] (\i:2) -- (\i+60:2);
  }
\end{scope} 
\end{scope}

\end{scope}

\end{tikzpicture}
    \caption{On the left side, a $3$-hole with six facing copies of $\Gamma_3^{(6)}$. On the right side, the points $\overline{b}_m$ and $\underline{b}_m$ for $\xi=433$ and $i=3$, together with the antipodal point of the hole $H$ in the left figure with respect to $\xi$.}
    \label{fig:hole}
\end{figure}
Recall that $\Gamma_k$ contains $r$ copies of $\Gamma_{k-1}$ numbered from $0$ to $r-1$. If $n$ is even, we say that \textbf{antipodal copy} of the $i$-th copy is the $(i+r/2)$-th copy where $i+r/2$ is taken modulo $r$. If $r$ is odd, we say that there are two antipodal copies, the $(i+\lfloor r/2\rfloor)$-th and the $[i+\lceil r/2\rceil)$-th.
\begin{definition}
If $r$ is even, an \textbf{antipodal point} $p$ of a hole $H$ in $\Gamma_\xi$ with respect to $\xi$ is a vertex of $H$ such that $\xi$ and $p$ belong to the antipodal copies of $\Gamma_m$ facing the hole and such that $\dist(\overline{b}_m,p)=\dist(\underline{b}_m,p)$ where $\overline{b}_m$ and $\underline{b}_m$ are the two gluing vertices for the copy of $\Gamma_m$ containing $\xi$. More precisely if $\Gamma_m$ is in position $i$, then $\overline{b}_m$ glues $\Gamma_m$ to the $(i+1)$th-copy and $\underline{b}_m$ to the $(i-1)$th-copy (see again \cref{fig:hole} for graphical aid). 

If $r$ is odd, an antipodal point $p$ of a hole $H$ in $\Gamma_\xi$ with respect to $\xi$ is the gluing vertex of the two antipodal copies of the copy of $\Gamma_m$ containing $\xi$.
\end{definition}

Note that, in the latter case,
the condition $\dist(\overline{b}_m,p)=\dist(\underline{b}_m,p)$ still holds since they are the two gluing vertices (not equal to the antipodal point) of the two copies of $\Gamma_m$. 

\begin{remark}
\label{rem:antipodal}
If $\Gamma_\xi$ grows away from $j$, then there exists an unbounded subset $M$ of $\mathbb{N}$ such that $\xi$ belongs to the $j$-th copy of $\Gamma_{\xi[m]}$ for all $m\in M$. Call $H_{m}$ the copy of the $m$-hole in $\Gamma_{\xi[m]}$, so that $\diam(H_{m_1}) < \diam (H_{m_2})$ with $m_1 < m_2$ in $M$. Now, set $p_m$ to be the antipodal point of $H_m$ with respect to $\xi$, in particular $\xi$ and $p_m$ belong to antipodal copies of $\Gamma_{\xi[m]}$ and $\dist(\xi,p_m) \rightarrow \infty$ as $m \rightarrow \infty$.
\end{remark}
Exploiting the remark, we provide the following.
\begin{definition}
We call the sequence $\{p_m\}_{m \in M}$ obtained from \cref{rem:antipodal}, an \textbf{antipodal sequence}.   
\end{definition}

Before working with antipodal sequences, we first examine when certain sequences cannot be horofunctions, making use of holes.

\begin{lemma}
Let $\xi$ be such that $\Gamma_\xi$ grows away from $j$. Suppose that $\{q_m\}_{m \in M}$ is a sequence of vertices such that $\dist(\xi,q_m) \rightarrow \infty$ as $m \rightarrow \infty$ and suppose there exists two $\overline{b}_N$ and $\underline{b}_N$ related to a $j$ copy $\Gamma_{\xi[N]}$ such that there are two infinite subsequences $\{q_{\overline{m}}\}$ and $\{q_{\underline{m}}\}$ with $q_{\overline{m}}$ (resp. $q_{\underline{m}}$) such that a geodesic connecting $q_{\overline{m}}$ (resp. $q_{\underline{m}}$) to any vertex in $\Gamma_{\xi[N]}$ pass through $\overline{b}_N$ (resp. $\underline{b}_N$). Then $\{q_m\}_{m \in M}$ does not represent a horofunction.
\end{lemma}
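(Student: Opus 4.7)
The plan is to invoke the characterization of horofunctions as pointwise limits of the embedded distance functions $\overline{\dist}_v$ (using $\xi$ as base vertex of $\Gamma_\xi$). I will show that the two given subsequences $\{q_{\overline{m}}\}$ and $\{q_{\underline{m}}\}$ force the family $\{\overline{\dist}_{q_m}\}$ to have two different pointwise accumulation values at a suitably chosen witness vertex $y \in \Gamma_{\xi[N]}$, hence $\{q_m\}$ cannot converge in the product topology and does not represent a horofunction.

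The first key step is the \emph{cut-point computation}. Since a geodesic from $q_{\overline{m}}$ to any vertex of $\Gamma_{\xi[N]}$ passes through $\overline{b}_N$, for every $y \in \Gamma_{\xi[N]}$ we have
\[
\dist(q_{\overline{m}},y) = \dist(q_{\overline{m}},\overline{b}_N) + \dist(\overline{b}_N,y).
\]
The base point $\xi$ lies in $\Gamma_{\xi[N]}$ (by construction of the $j$-copy containing it), so the same identity applies with $y=\xi$. Subtracting, the contribution $\dist(q_{\overline{m}},\overline{b}_N)$ cancels and we obtain the key identity, valid for every $y \in \Gamma_{\xi[N]}$:
\[
\overline{\dist}_{q_{\overline{m}}}(y) = \dist(\overline{b}_N,y) - \dist(\overline{b}_N,\xi),
\]
which is \emph{independent of $\overline{m}$}. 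The symmetric computation gives
\[
\overline{\dist}_{q_{\underline{m}}}(y) = \dist(\underline{b}_N,y) - \dist(\underline{b}_N,\xi).
\]

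The second step is to \emph{exhibit a vertex where the two values differ}. Evaluating at $y=\overline{b}_N$ and at $y=\underline{b}_N$, the equality of the two subsequential limits at \emph{both} points would force
\[
\dist(\overline{b}_N,\xi) - \dist(\underline{b}_N,\xi) = -\dist(\overline{b}_N,\underline{b}_N) \quad\text{and}\quad \dist(\overline{b}_N,\xi) - \dist(\underline{b}_N,\xi) = \dist(\overline{b}_N,\underline{b}_N),
\]
which is impossible since $\overline{b}_N \neq \underline{b}_N$. Hence at least one of $\overline{b}_N, \underline{b}_N$ serves as a witness $y$ where the constant sequences $\overline{\dist}_{q_{\overline{m}}}(y)$ and $\overline{\dist}_{q_{\underline{m}}}(y)$ take different values, preventing pointwise convergence of $\overline{\dist}_{q_m}$ and therefore ruling out that $\{q_m\}$ defines a horofunction.

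I do not expect a serious obstacle here; the argument is a clean application of the cut-point property to the two canonical geometric formulas for horofunctions through a cut vertex. The only point requiring mild care is to make sure the base vertex $\xi$ sits inside the copy $\Gamma_{\xi[N]}$ (so that the cancellation in the computation above goes through), which is guaranteed by the hypothesis that $N$ indexes a $j$-copy containing $\xi$.
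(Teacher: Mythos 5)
Your argument is correct and takes essentially the same route as the paper: both exploit the cut-point identity $\dist(q_{\overline{m}},y)=\dist(q_{\overline{m}},\overline{b}_N)+\dist(\overline{b}_N,y)$ (and its analogue for $\underline{b}_N$) to show that the two subsequences force different eventual values of $\overline{\dist}_{q_m}$ at a gluing vertex, hence no pointwise limit. If anything, your explicit observation that the two candidate witnesses $\overline{b}_N$ and $\underline{b}_N$ cannot both fail is a cleaner way of handling the step the paper dispatches with ``possibly choosing another vertex.''
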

\begin{proof}
Let us prove that $\{q_m\}_{m \in M}$ is not a weakly-geodesic rays.
We fix $y=\overline{b}_N$ and compute $$|\dist(q_{\overline{m}},\xi)-\dist(q_{\overline{m}} ,y)-\left( \dist(q_{\underline{m}},\xi)-\dist(q_{\underline{m}},y)\right)|.$$
By hypothesis, $$\dist(q_{\overline{m}},\xi)=\dist(q_{\overline{m}},\overline{b}_N)+\dist(\overline{b}_N,\xi) \text{ and } \dist(q_{\underline{m}},\overline{b}_N)=\dist(q_{\underline{m}},\underline{b}_N)+\dist(\underline{b}_N,\overline{b}_N).$$
Hence,
$$|\dist(q_{\overline{m}},\xi)-\dist(q_{\overline{m}},\overline{b}_N)-\left( \dist(q_{\underline{m}},\xi)-\dist(q_{\underline{m}},\overline{b}_N)\right)|=|\dist(\overline{b}_N,\xi)-\dist(\underline{b}_N,\xi)+\dist(\underline{b}_N,\overline{b}_N)|$$
and possibly choosing another vertex inside the smallest subgraph containing the three vertices $\xi, \underline{b}_N,\overline{b}_N$
(see \cref{lem:in}), one can get $\dist(\underline{b}_N,\xi)<\dist(\underline{b}_N,\overline{b}_N)+\dist(\overline{b}_N,\xi)$. 
\end{proof}

This means that we can distinguish three possible main types of weakly-geodesic rays: the ones that almost always pass through $\overline{b}_m$, the ones through $\underline{b}_m$ and the ones that almost always are at the same distance from $\overline{b}_m$ and $\underline{b}_m$. In particular, for $\xi=wj^\infty$, we want to show that in each of the first two types, which are symmetric due to the geometry of the limit graphs, there exists a unique Busemann point (see \cref{prop:two-busemann}), while the third type provides a non-Busemann point (see \cref{lem:antipodal-non-busemann}) together with countably many other non-Busemann points (see \cref{prop:infinite-non}).

\begin{proposition} \label{prop:two-busemann}
If $\Gamma_\xi$ is such that $\xi=wj^\infty$, then the boundary has exactly two Busemann points.
\end{proposition}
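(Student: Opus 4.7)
The plan is to produce two explicit almost-geodesic rays, show they yield distinct Busemann points, and then invoke the trichotomy established just before this statement (together with the forthcoming \cref{lem:antipodal-non-busemann}) to rule out any further Busemann point. Concretely I will build the two candidates $\gamma^\pm$ out of the gluing vertices $\overline{b}_m,\underline{b}_m$ attached to the nested copies of $\Gamma_{\xi[m]}$ surrounding $\xi$.

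For the construction I exploit that, since $\xi=wj^{\infty}$, for every $m\geq|w|$ the basepoint $\xi$ lies in a copy $\Gamma_{\xi[m]}$ whose only attaching vertices to the rest of $\Gamma_\xi$ are $\overline{b}_m$ and $\underline{b}_m$, and the nested self-similar structure forces $\overline{b}_m$ to lie on a geodesic from $\xi$ to $\overline{b}_{m+1}$ (and symmetrically for $\underline{b}_m$). Concatenating these segments yields geodesic rays $\gamma^+$ and $\gamma^-$ starting at $\xi$ and passing through every $\overline{b}_m$, respectively every $\underline{b}_m$; being geodesic they are in particular almost-geodesic, so they define Busemann points $h^+$ and $h^-$.

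Next I show that $h^\pm$ exhaust the Busemann points. Let $\gamma$ be any almost-geodesic ray with horofunction $h$. By the trichotomy, $\gamma$ is of type $\overline{b}$, type $\underline{b}$, or equidistant; the equidistant case yields non-Busemann horofunctions by \cref{lem:antipodal-non-busemann} and may be discarded. Assume $\gamma$ is of type $\overline{b}$. For any test vertex $y$ I choose $m$ so large that $y\in\Gamma_{\xi[m]}$; then for $t$ sufficiently large $\gamma(t)$ lies on the far side of $\overline{b}_m$, and since the alternative route through $\underline{b}_m$ is strictly longer past a threshold depending only on $m$, $\overline{b}_m$ appears on every geodesic from $\gamma(t)$ to both $y$ and $\xi$. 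A cancellation of the common summand $\dist(\gamma(t),\overline{b}_m)$ gives
\[
\overline{\dist}_{\gamma(t)}(y)=\dist(\overline{b}_m,y)-\dist(\overline{b}_m,\xi),
\]
independent of $\gamma$ and stable for all large $m$; hence $h=h^+$. The symmetric argument with $\underline{b}_m$ yields $h=h^-$ in the other case. Distinctness of $h^+$ and $h^-$ I verify at the neighbour $y=\gamma^+(1)$ of $\xi$: since $\gamma^+$ is geodesic past $y$, $h^+(y)=-1$, whereas $\gamma^-$ escapes on the opposite side of $\xi$ so that for large $t$ the geodesic from $\gamma^-(t)$ to $y$ passes through $\xi$, giving $h^-(y)=+1$.

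The main obstacle is turning the combinatorial cut-vertex property of $\overline{b}_m$ and $\underline{b}_m$ into the quantitative statement used in the uniqueness step: one must verify that the detour through the opposite gluing vertex is strictly longer than the route past $\overline{b}_m$ once $t$ is beyond a threshold depending only on $m$, and that a single $m$ suffices to stabilize $\overline{\dist}_{\gamma(t)}(y)$ uniformly for all rays of a given type. A related technical point is checking that consecutive gluing vertices $\overline{b}_m,\overline{b}_{m+1}$ lie on a common geodesic from $\xi$, which follows from a direct analysis of the position of $\overline{b}_{m+1}$ inside $\Gamma_{\xi[m+1]}$ using that $\xi$ sits in the $j$-th subcopy at every level.
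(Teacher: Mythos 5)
Your proposal follows essentially the same route as the paper: both build the two Busemann points from geodesic rays running through the gluing-vertex families $\{\overline{b}_m\}$ and $\{\underline{b}_m\}$, and both use the fact that these are cut vertices separating $\Gamma_{\xi[m]}$ from the rest of the graph to force any other (almost-)geodesic ray to reproduce one of the two resulting horofunctions. The quantitative step you defer as the ``main obstacle'' (that $\overline{b}_m$ lies on a geodesic from $\xi$ to $\overline{b}_{m+1}$ and that the detour through $\underline{b}_m$ is strictly longer) is precisely the $f(r)$-versus-$f(r)+1$ copy-counting argument the paper carries out explicitly, so nothing essential is missing.
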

\begin{proof}
By \cref{lem:in}, we can assume $\xi=j^\infty$. Consider the antipodal sequence $\{p_m\}_{m \in M}$ and its associated gluing vertices sequences $\{\overline{b}_m\}_{m \in M}$ and $\{\underline{b}_m\}_{m \in M}$. We claim that these two sequences lie on two different geodesic rays. With a slight abuse of notation, we set $\dist(\overline{b}_m,\xi)=\dist(\underline{b}_m,\xi)=m$. We consider the path $\overline{\gamma}$ such that $\overline{\gamma}(m)=\overline{b}_m$ and $\overline{\gamma}([m_1,m_2])$ is a geodesic going from $\overline{b}_{m_1}$ to $\overline{b}_{m_2}$ where $m_1$ and $m_2$ are two consecutive indices in $M$. 
In a similar way, we construct the path $\underline{\gamma}$ passing through the points
of the sequence $\{\underline{b}_m\}_{m \in M}$. We have to show that the concatenation $\overline{\gamma}_{|[m_1,m_2]} \ast \overline{\gamma}_{|[m_2,m_3]}$ is a geodesic. By construction, $\overline{\gamma}([m_2,m_3])$ lies entirely in $\Gamma_{\xi[m_3]}$ and $\overline{\gamma}([m_2,m_3]) \cap \Gamma_{\xi[m_2]}=\{\overline{\gamma}(m_2)\}$. On the other hand, removing $\overline{\gamma}(m_2)$ and $\underline{\gamma}(m_2)$ from $\Gamma_{\xi[m_3]}$ disconnects $\Gamma_{\xi[m_2]}$ and this implies that a geodesic from $\overline{\gamma}(m_1)$ and $\overline{\gamma}(m_3)$ must pass through one of these points and its length must be the sum of the length of two geodesics, one from $\overline{\gamma}(m_1)$ to $p$ and the other from $p$ to $\overline{\gamma}(m_3)$, where $p \in \{\overline{b}_{m_2},\underline{b}_{m_2}\}$. We can exclude $p = \underline{b}_{m_2}$ because, by construction, 
$$
\dist(\overline{b}_{m_t}, \overline{b}_{m_2}) < \dist(\overline{b}_{m_t}, \underline{b}_{m_2}) \quad \text{for } t = 1, 3,
$$
and we now explain why. Suppose that $\overline{b}_{m_1}$ is gluing two copies of $\Gamma_k$ for some $k$. Then:
\begin{enumerate}
    \item To reach $\overline{b}_{m_2}$, one must pass through $f(r)$ copies of $\Gamma_k$;
    \item To reach $\underline{b}_{m_2}$, one must either:
    \begin{itemize}
        \item pass through $\underline{b}_{m_1}$ and then through $f(r)$ copies of $\Gamma_k$; or
        \item pass through more than $f(r)+1$ copies of $\Gamma_k$.
    \end{itemize}
\end{enumerate}
In both cases under point (2), the path requires at least $f(r)+1$ copies of $\Gamma_k$, hence
$$
\dist(\overline{b}_{m_t}, \overline{b}_{m_2}) < \dist(\overline{b}_{m_t}, \underline{b}_{m_2}),
$$
and the required geodesic does not pass through $\underline{b}_{m_2}$. Note that all the previous considerations on coarse distances are true because we are implicitly exploiting the simmetries of the case $\xi=j^\infty$. To conclude the proof, let us note that, since $\{\overline{b}_m\}_{m \in M}$ and $\{\underline{b}_m\}_{m \in M}$ are sequences of cut points, another infinite geodesic starting from $\xi$ should pass through infinitely many $\{\overline{b}_m\}_{m \in M}$ or infinitely many $\{\underline{b}_m\}_{m \in M}$ and hence should describe the same horofunction of either one of the two Busemann points already constructed.
\end{proof}

\begin{lemma} \label{lem:antipodal-non-busemann}
Let $\Gamma_\xi$ be such that $\xi=wj^\infty$. Then the antipodal sequence describes a horofunction, which is not a Busemann point.
\end{lemma}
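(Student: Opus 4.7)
My plan is to establish two things: first, that $\{\overline{\dist}_{p_m}\}$ converges pointwise, so the antipodal sequence describes an actual horofunction $h$; and second, that $h$ differs from each of the two Busemann points of \cref{prop:two-busemann}, which by that proposition forces $h$ to be non-Busemann. Throughout I reduce to $\xi = j^\infty$ via \cref{lem:in}.

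For convergence, I fix $y \in \Gamma_\xi$; by \cref{lem:in} we have $y \in \Gamma_{\xi[m]}$ for all large enough $m$. The key structural input is the cut-point property of $\{\overline{b}_m, \underline{b}_m\}$ (separating $\Gamma_{\xi[m]}$ from the rest of $\Gamma_\xi$, as already used in the proof of \cref{prop:two-busemann}), together with the defining antipodal identity $\dist(p_m, \overline{b}_m) = \dist(p_m, \underline{b}_m)$. Setting $d_m := \dist(\overline{b}_m, \xi) = \dist(\underline{b}_m, \xi)$ (equal since $\xi = j^\infty$ is dihedrally symmetric), these inputs yield
\[
\overline{\dist}_{p_m}(y) \;=\; \min\bigl(\dist(\overline{b}_m, y) - d_m,\; \dist(\underline{b}_m, y) - d_m\bigr).
\]
The two expressions inside the minimum are precisely the defining sequences of the Busemann functions $h_R$ and $h_L$ attached to the geodesic rays $\overline{\gamma}, \underline{\gamma}$ built in \cref{prop:two-busemann}, so both converge pointwise. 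Therefore $\overline{\dist}_{p_m}(y) \to h(y) := \min(h_R(y), h_L(y))$, which is the desired horofunction.

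To prove $h$ is not Busemann, I use that \cref{prop:two-busemann} leaves only $h_R$ and $h_L$ as candidates. I distinguish $h$ from $h_R$ by evaluating at $y = \underline{b}_1$: this vertex lies on $\underline{\gamma}$, so $h_L(\underline{b}_1) = -d_1$, whereas the geodesic from $\overline{b}_m$ back to $\xi$ is the reverse of the initial segment of $\overline{\gamma}$, stays on the right side, and does not pass through $\underline{b}_1$. This forces the strict inequality $\dist(\overline{b}_m, \underline{b}_1) > d_m - d_1$, hence $h_R(\underline{b}_1) > -d_1 = h(\underline{b}_1)$, so $h \neq h_R$; the symmetric evaluation at $\overline{b}_1$ rules out $h = h_L$. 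The main obstacle I anticipate is justifying this strict inequality, equivalently that no geodesic from $\overline{b}_m$ to $\xi$ may traverse a left gluing vertex. It should follow from exactly the cut-point analysis carried out in \cref{prop:two-busemann}, where the ``right'' entry $\overline{b}_k$ was shown to be strictly closer to $\overline{b}_m$ than the ``left'' entry $\underline{b}_k$ at every intermediate level.
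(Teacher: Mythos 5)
Your proof is correct, but it takes a genuinely different route from the paper on both halves of the statement. For the convergence half, the paper verifies directly that the antipodal sequence is a weakly-geodesic ray: it tracks which of the two gluing vertices of $\Gamma_{\xi[N]}$ a geodesic from $\xi$ (resp.\ from $y$) to $p_m$ passes through, shows these choices stabilise in $m$, and lets the resulting telescoping sum vanish. Your identity
$\overline{\dist}_{p_m}(y)=\min\bigl(\overline{\dist}_{\overline{b}_m}(y),\overline{\dist}_{\underline{b}_m}(y)\bigr)$,
which follows from the same two inputs the paper uses (the cut-point property of $\{\overline{b}_m,\underline{b}_m\}$ and the antipodal identity $\dist(p_m,\overline{b}_m)=\dist(p_m,\underline{b}_m)$, plus $\dist(\xi,\overline{b}_m)=\dist(\xi,\underline{b}_m)$ from the reflection symmetry of $j^\infty$), packages that case analysis into a single formula and yields the stronger conclusion that the limiting horofunction is exactly $\min(h_R,h_L)$ --- more informative than what the paper records. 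For the non-Busemann half the paper imports \cite[Proposition 4.2]{DD16} to show the ray is not almost-geodesic, whereas you deduce non-Busemann-ness by showing $h\neq h_R$ and $h\neq h_L$ (your strict inequality $\dist(\overline{b}_m,\underline{b}_1)>d_m-d_1$ does follow from the $f(r)$ versus $f(r)+1$ copy count in \cref{prop:two-busemann}, combined with the reflection swapping $\overline{b}_k$ and $\underline{b}_k$) and then invoking the ``exactly two Busemann points'' classification. This makes the argument self-contained within the paper, at the cost of leaning on the full strength of \cref{prop:two-busemann}: that proposition's proof only analyses genuine geodesic rays, so your conclusion inherits whatever care is needed to rule out extra Busemann points coming from almost-geodesic rays, while the paper's citation of \cite{DD16} attacks the almost-geodesic property directly and is independent of the classification. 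Two small points to tidy up: you should note explicitly that $\min(h_R,h_L)$ is unbounded below and hence not of the form $\overline{\dist}_v$, so that the pointwise limit really lies in $\partial_h\Gamma_\xi$; and the reduction to $\xi=j^\infty$ silently replaces the antipodal sequence based at $wj^\infty$ by the one based at $j^\infty$, which is harmless here but worth a sentence.
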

\begin{proof}
First, we need to prove that $P:T \rightarrow \Gamma_\xi$ where $P(\dist(\xi,p_m)):=p_m$ is a weakly-geodesic ray, that is, if we fix a vertex $y \in \Gamma_\xi$, eventually
$$
|\dist(p_m, y) - \dist(p_n, y) - (\dist(\xi,p_m) - \dist(\xi,p_n))| =0.
$$

Let $N$ be the smallest $n$ such that $\xi$ and $y$ belong to $\Gamma_{\xi[n]}$, and $\Gamma_{\xi[n]}$ is the $j$ copy in $\Gamma_{\xi[n+1]}$. From now on, we consider $n,m \in T$ such that $n,m \geq N$. Let $b_m$ (resp. $b_n$) be (possibly the same) gluing vertex of $\Gamma_{\xi[N]}$ to the rest of $\Gamma_{\xi[N+1]}$ such that a geodesic connecting $\xi$ and $p_m$ (resp. $p_n$) passes through $b_m$ (resp. $b_n$), so that
$$
\dist(\xi, p_m) = \dist(\xi,b_m) +\dist(b_m, p_m) \text{ and } \dist(\xi, p_n) = \dist(\xi,b_n) +\dist(b_n, p_n).
$$
Analogously, let $\widetilde{b}_m$ and $\widetilde{b}_n$ be the gluing vertices such that a geodesic connecting $y$ and $p_m$ (resp. $p_n$) passes through $\widetilde{b}_m$ (resp. $\widetilde{b}_n$), again we have
$$
\dist(y, p_m) = \dist(y,\widetilde{b}_m) +\dist(\widetilde{b}_m, p_m) \text{ and } \dist(y, p_n) = \dist(y,\widetilde{b}_n) +\dist(\widetilde{b}_n, p_n).
$$
The definition of antipodal point implies that $\dist(p_N,b_N)=\dist(p_N,\widetilde{b}_N)$ with $\widetilde{b}_N$ as in the previous sentence lying on a geodesic between $y$ and $p_N$. We claim that $\dist(p_m,b_m)=\dist(p_m,\widetilde{b}_m)$ for all $m>N$. If $b_m=\widetilde{b}_m$ we are done, otherwise without loss of generality suppose that $b_m$ is closer to $\overline{b}_m$ and $\widetilde{b}_m$ is closer to $\underline{b}_m$. This means that 
$$\dist(p_m,b_m)=\dist(p_m,\overline{b}_m)+\dist(\overline{b}_m,b_m)=\dist(p_m,\underline{b}_m)+\dist(\underline{b}_m,\widetilde{b}_m)=\dist(p_m,\widetilde{b}_m),$$
where $\dist(p_m,\overline{b}_m)=\dist(p_m,\underline{b}_m)$ follows again by definition of antipodal point, while $\dist(\overline{b}_m,b_m)=\dist(\underline{b}_m,\widetilde{b}_m)$ follows from the assumption and the geometry of $\Gamma_k$ (see \cref{fig:parallel-geodesics}, these are the finite geodesics described in \cref{prop:two-busemann}). Again, here we are exploiting the symmetries of the case $\xi=wj^\infty$.  \\

\begin{figure}
    \centering
\begin{tikzpicture}
\begin{scope}[scale=0.2]
\begin{scope}[xshift=8cm,yshift=15cm]
\end{scope}
\begin{scope} %1
  \foreach \i in {0,60,...,300} {
    \draw[thick] (\i:2) -- (\i+60:2);
  }
    \draw[ForestGreen] (0:2) node[circle,inner sep=1pt,fill]{} node[right]{\footnotesize $\widetilde{b}_4$};
    \draw[ForestGreen] (120:2) node[circle,inner sep=1pt,fill]{} node[above left]{\footnotesize $b_4$};
    \draw[purple] (300:2) node[circle,inner sep=1pt,fill]{} node[below]{\footnotesize $y$};
    \draw[purple] (180:2) node[circle,inner sep=1pt,fill]{} node[left]{\footnotesize $\xi$};
\end{scope}
\begin{scope}[xshift=4cm] %2
  \foreach \i in {0,60,...,300} {
    \draw[thick] (\i:2) -- (\i+60:2);
  }
  \draw[thick,ForestGreen] (60:2) -- (120:2);
  \draw[thick,ForestGreen] (120:2) -- (180:2);
\end{scope}
\begin{scope}[xshift=-2cm,yshift=3.464cm] %3
  \foreach \i in {0,60,...,300} {
    \draw[thick] (\i:2) -- (\i+60:2);
  }
  \draw[thick,ForestGreen] (300:2) -- (0:2);
  \draw[thick,ForestGreen] (0:2) -- (60:2);
\end{scope}
\begin{scope}[yshift=6.928cm] %4
  \foreach \i in {0,60,...,300} {
    \draw[thick] (\i:2) -- (\i+60:2);
  }
  \draw[thick,ForestGreen] (240:2) -- (180:2);
  \draw[thick,ForestGreen] (120:2) -- (180:2);
\end{scope}
\begin{scope}[xshift=4cm,yshift=6.928cm] %5
  \foreach \i in {0,60,...,300} {
    \draw[thick] (\i:2) -- (\i+60:2);
  }

\end{scope}
\begin{scope}[xshift=6cm,yshift=3.464cm] %6
  \foreach \i in {0,60,...,300} {
    \draw[thick] (\i:2) -- (\i+60:2);
  }
  \draw[thick,ForestGreen] (240:2) -- (300:2);
  \draw[thick,ForestGreen] (300:2) -- (0:2);
\end{scope} 
\end{scope}

\begin{scope}[scale=0.2,xshift=12cm] %2
\begin{scope} %1
  \foreach \i in {0,60,...,300} {
    \draw[thick] (\i:2) -- (\i+60:2);
  }
\end{scope}
\begin{scope}[xshift=4cm] %2
  \foreach \i in {0,60,...,300} {
    \draw[thick] (\i:2) -- (\i+60:2);
  }
\end{scope}
\begin{scope}[xshift=-2cm,yshift=3.464cm] %3
  \foreach \i in {0,60,...,300} {
    \draw[thick] (\i:2) -- (\i+60:2);
  }
  \draw[thick,ForestGreen] (120:2) -- (180:2);
  \draw[thick,ForestGreen] (120:2) -- (60:2);
\end{scope}
\begin{scope}[yshift=6.928cm] %4
  \foreach \i in {0,60,...,300} {
    \draw[thick] (\i:2) -- (\i+60:2);
  }
  \draw[thick,ForestGreen] (300:2) -- (240:2);
  \draw[thick,ForestGreen] (0:2) -- (300:2);
\end{scope}
\begin{scope}[xshift=4cm,yshift=6.928cm] %5
  \foreach \i in {0,60,...,300} {
    \draw[thick] (\i:2) -- (\i+60:2);
  }
  \draw[thick,ForestGreen] (120:2) -- (180:2);
  \draw[thick,ForestGreen] (120:2) -- (60:2);
\end{scope}
\begin{scope}[xshift=6cm,yshift=3.464cm] %6
  \foreach \i in {0,60,...,300} {
    \draw[thick] (\i:2) -- (\i+60:2);
  }
\end{scope} 
\end{scope}

\begin{scope}[scale=0.2,xshift=-6cm,yshift=10.392cm] %3
\begin{scope} %1
  \foreach \i in {0,60,...,300} {
    \draw[thick] (\i:2) -- (\i+60:2);
  }
\end{scope}
\begin{scope}[xshift=4cm] %2
  \foreach \i in {0,60,...,300} {
    \draw[thick] (\i:2) -- (\i+60:2);
  }
  \draw[thick,ForestGreen] (0:2) -- (60:2);
  \draw[thick,ForestGreen] (0:2) -- (300:2);
\end{scope}
\begin{scope}[xshift=-2cm,yshift=3.464cm] %3
  \foreach \i in {0,60,...,300} {
    \draw[thick] (\i:2) -- (\i+60:2);
  }
\end{scope}
\begin{scope}[yshift=6.928cm] %4
  \foreach \i in {0,60,...,300} {
    \draw[thick] (\i:2) -- (\i+60:2);
  }
\end{scope}
\begin{scope}[xshift=4cm,yshift=6.928cm] %5
  \foreach \i in {0,60,...,300} {
    \draw[thick] (\i:2) -- (\i+60:2);
  }
  \draw[thick,ForestGreen] (0:2) -- (60:2);
  \draw[thick,ForestGreen] (0:2) -- (300:2);
\end{scope}
\begin{scope}[xshift=6cm,yshift=3.464cm] %6
  \foreach \i in {0,60,...,300} {
    \draw[thick] (\i:2) -- (\i+60:2);
  }
  \draw[thick,ForestGreen] (120:2) -- (180:2);
  \draw[thick,ForestGreen] (180:2) -- (240:2);
\end{scope} 
\end{scope}

\begin{scope}[scale=0.2,yshift=20.784cm] %4
\begin{scope} %1
  \foreach \i in {0,60,...,300} {
    \draw[thick] (\i:2) -- (\i+60:2);
  }
  \draw[thick,ForestGreen] (240:2) -- (180:2);
  \draw[thick,ForestGreen] (120:2) -- (180:2);
\end{scope}
\begin{scope}[xshift=4cm] %2
  \foreach \i in {0,60,...,300} {
    \draw[thick] (\i:2) -- (\i+60:2);
  }
\end{scope}
\begin{scope}[xshift=-2cm,yshift=3.464cm] %3
  \foreach \i in {0,60,...,300} {
    \draw[thick] (\i:2) -- (\i+60:2);
  }
  \draw[thick,ForestGreen] (0:2) -- (60:2);
  \draw[thick,ForestGreen] (0:2) -- (300:2);
\end{scope}
\begin{scope}[yshift=6.928cm] %4
  \foreach \i in {0,60,...,300} {
    \draw[thick] (\i:2) -- (\i+60:2);
  }
  \draw[thick,ForestGreen] (180:2) -- (240:2);
  \draw[thick,ForestGreen] (180:2) -- (120:2);
  \draw[ForestGreen] (120:2) node[circle,inner sep=1pt,fill]{} node[above]{\footnotesize $\overline{b}_4$};
\end{scope}
\begin{scope}[xshift=4cm,yshift=6.928cm] %5
  \foreach \i in {0,60,...,300} {
    \draw[thick] (\i:2) -- (\i+60:2);
  }
\end{scope}
\begin{scope}[xshift=6cm,yshift=3.464cm] %6
  \foreach \i in {0,60,...,300} {
    \draw[thick] (\i:2) -- (\i+60:2);
  }
\end{scope} 
\end{scope}

\begin{scope}[scale=0.2,xshift=12cm,yshift=20.784cm] %5
\begin{scope} %1
  \foreach \i in {0,60,...,300} {
    \draw[thick] (\i:2) -- (\i+60:2);
  }

\end{scope}
\begin{scope}[xshift=4cm] %2
  \foreach \i in {0,60,...,300} {
    \draw[thick] (\i:2) -- (\i+60:2);
  }
\end{scope}
\begin{scope}[xshift=-2cm,yshift=3.464cm] %3
  \foreach \i in {0,60,...,300} {
    \draw[thick] (\i:2) -- (\i+60:2);
  }
\end{scope}
\begin{scope}[yshift=6.928cm] %4
  \foreach \i in {0,60,...,300} {
    \draw[thick] (\i:2) -- (\i+60:2);
  }
\end{scope}
\begin{scope}[xshift=4cm,yshift=6.928cm] %5
  \foreach \i in {0,60,...,300} {
    \draw[thick] (\i:2) -- (\i+60:2);
  }
\end{scope}
\begin{scope}[xshift=6cm,yshift=3.464cm] %6
  \foreach \i in {0,60,...,300} {
    \draw[thick] (\i:2) -- (\i+60:2);
  }
\end{scope} 
\end{scope}

\begin{scope}[scale=0.2,xshift=18cm,yshift=10.392cm] %6
\begin{scope} %1
  \foreach \i in {0,60,...,300} {
    \draw[thick] (\i:2) -- (\i+60:2);
  }
  \draw[thick,ForestGreen] (240:2) -- (300:2);
  \draw[thick,ForestGreen] (300:2) -- (0:2);
\end{scope}
\begin{scope}[xshift=4cm] %2
  \foreach \i in {0,60,...,300} {
    \draw[thick] (\i:2) -- (\i+60:2);
  }
   \draw[thick,ForestGreen] (180:2) -- (120:2);
  \draw[thick,ForestGreen] (120:2) -- (60:2);
\end{scope}
\begin{scope}[xshift=-2cm,yshift=3.464cm] %3
  \foreach \i in {0,60,...,300} {
    \draw[thick] (\i:2) -- (\i+60:2);
  }
\end{scope}
\begin{scope}[yshift=6.928cm] %4
  \foreach \i in {0,60,...,300} {
    \draw[thick] (\i:2) -- (\i+60:2);
  }
\end{scope}
\begin{scope}[xshift=4cm,yshift=6.928cm] %5
  \foreach \i in {0,60,...,300} {
    \draw[thick] (\i:2) -- (\i+60:2);
  }
\end{scope}
\begin{scope}[xshift=6cm,yshift=3.464cm] %6
  \foreach \i in {0,60,...,300} {
    \draw[thick] (\i:2) -- (\i+60:2);
  }
  \draw[thick,ForestGreen] (240:2) -- (300:2);
  \draw[thick,ForestGreen] (300:2) -- (0:2);
  \draw[ForestGreen] (0:2) node[circle,inner sep=1pt,fill]{} node[right]{\footnotesize $\underline{b}_4$};
\end{scope} 
\end{scope}
\end{tikzpicture}
    \caption{Two geodesics connecting the gluing vertices as in the proof of \cref{lem:antipodal-non-busemann}: they have the same length.}
    \label{fig:parallel-geodesics}
\end{figure}
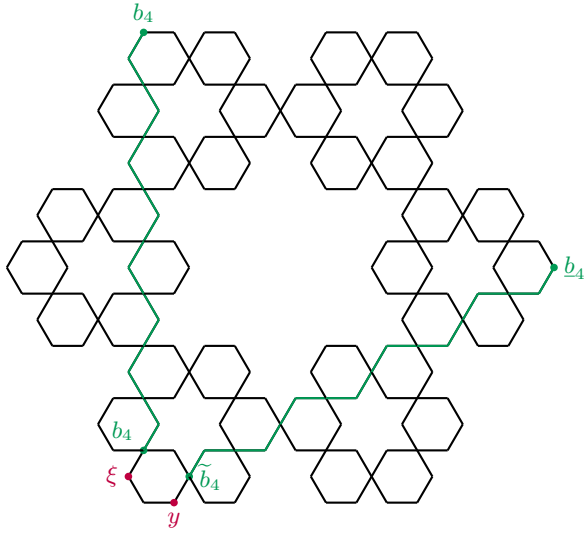
Combining this with all the previous equalities, we have
\begin{multline*}
|\dist(p_m, y) - \dist(\xi,p_m) - (\dist(p_n, y) - \dist(\xi,p_n))| =\\
|\dist(\widetilde{b}_m, y) - \dist(\xi,b_m) - (\dist(\widetilde{b}_n, y) - \dist(\xi,b_n))|.
\end{multline*}

By reasoning as before and recalling the symmetries of the case $\xi=wj^\infty$, we have $\dist(p_m,\underline{b}_N)=\dist(p_m,\overline{b}_N)$. This means that $b_m$ is the closest vertex to $\xi$ among $\underline{b}_N$ and $\overline{b}_N$, since this holds for any $m$, we have $b_m=b_n$. In the same way, $\widetilde{b}_m=\widetilde{b}_n$.
Hence

\begin{multline*}
|\dist(\widetilde{b}_m, y) - \dist(\xi,b_m) - (\dist(\widetilde{b}_n, y) - \dist(\xi,b_n))|=\\
|\dist(\widetilde{b}_m, y) - \dist(\xi,b_m) - (\dist(\widetilde{b}_m, y) - \dist(\xi,b_m))| =0
\end{multline*}

To conclude, we should prove that $P:T \rightarrow \Gamma_\xi$ is not a Busemann point. But this follows directly, almost verbatim, from the proof of the \cite[Proposition 4.2]{DD16}, taking $v_1$ and $v_2$ as $\underline{b}_N$ and $\overline{b}_N$ and noticing that $\varphi_y(x)=\overline{\dist}_x(y)$.
\end{proof}

Before continuing, it is worth mentioning that the horofunction described in the previous lemma can be seen as sequence of atoms (see \cite{BBM,Perego} for an introduction to the formalism) which are totally disconnected. It is straightforward to verify this, for example, when $\xi=\overline{4}$, $r=6$ and one computes the first level atoms (i.e. the atoms related to $B(\xi,1)$). The existence of disconnected atoms is yet to be understood in the hyperbolic and $\mathrm{CAT}(0)$ cases.

\begin{proposition}\label{prop:infinite-non}
If $\Gamma_\xi$ is such that $\xi=wj^\infty$, then $\partial_h \Gamma_\xi$ contains countably many non-Busemann points.
\end{proposition}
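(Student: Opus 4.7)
The plan is to construct, for each $k \in \mathbb{N}$, a weakly-geodesic ray $\gamma_k$ in $\Gamma_\xi$ whose associated horofunction $h_k$ is non-Busemann, and to show that the resulting horofunctions are pairwise distinct. The construction adapts the antipodal idea of \cref{lem:antipodal-non-busemann}, exploiting the self-similar hierarchy of $\Gamma_\xi$ to produce a distinct antipodal-type ray at each scale. By \cref{lem:in} we may assume $\xi = j^\infty$.

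Concretely, for each sufficiently large $k$, the ray $\gamma_k$ is built in two stages. First, starting at $\xi$, it follows the Busemann geodesic through the gluing vertices $\{\overline{b}_m\}_{m\leq k}$ up to $\overline{b}_k$; that this is an initial geodesic segment was established in \cref{prop:two-busemann}. Second, at $\overline{b}_k$ it enters the adjacent copy of $\Gamma_k$ on the other side of $\overline{b}_k$ and continues inside that sub-copy as an antipodal-type sequence of cut points with respect to $\overline{b}_k$, using the version of \cref{rem:antipodal} applied to this sub-structure. Since the sub-copy is again of the self-similar form described by \cref{lem:in}, the antipodal sequence $\{q_m^{(k)}\}_{m > k}$ constructed inside it mirrors, up to the dihedral rotations of \cref{thm:iso}, the antipodal sequence of the whole graph.

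Verifying that $\gamma_k$ is a weakly-geodesic ray and hence that $h_k$ is a horofunction repeats, essentially line by line, the distance bookkeeping of \cref{lem:antipodal-non-busemann} applied inside the sub-copy: the key balance identities $\dist(q_m^{(k)}, \overline{b}_N) = \dist(q_m^{(k)}, \underline{b}_N)$ for $N > k$ persist thanks to the internal symmetry of the sub-copy, and the finite prefix along $\{\overline{b}_m\}_{m \leq k}$ contributes only a constant shift that cancels in the weakly-geodesic condition of \cref{def:geodesic}(3). The non-Busemann property of $h_k$ follows by re-applying the cut-point argument borrowed from \cite[Proposition 4.2]{DD16}, with the two distinguished cut points now taken to be the pair of gluing vertices of the level-$k$ sub-copy entered by $\gamma_k$.

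Finally, pairwise distinctness of $\{h_k\}$ is obtained by evaluating at appropriate witness vertices: for $k < k'$, pick a vertex $v$ lying on the initial Busemann segment of $\gamma_{k'}$ strictly between $\overline{b}_k$ and $\overline{b}_{k'}$. The ray $\gamma_k$ turns away from the principal Busemann direction at $\overline{b}_k$, whereas $\gamma_{k'}$ passes through $v$ before turning, producing a definite discrepancy in $\overline{\dist}_{\gamma_k(t)}(v)$ versus $\overline{\dist}_{\gamma_{k'}(t)}(v)$ that certifies $h_k \neq h_{k'}$. The main obstacle I anticipate is the verification of the weakly-geodesic condition across the junction at $\overline{b}_k$: one must show that the balance identities inside the sub-copy interact correctly with distances measured from the $\xi$-side of $\overline{b}_k$, which requires combining the symmetry-based cancellations of \cref{lem:antipodal-non-busemann} with the cut-point property of $\overline{b}_k$ to force every geodesic from $\xi$ to $\gamma_k(t)$ (for $t$ large) to factor through $\overline{b}_k$.
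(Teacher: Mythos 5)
Your construction has a fatal structural problem at its core. The ``adjacent copy of $\Gamma_k$ on the other side of $\overline{b}_k$'' is the $(j{+}1)$-th copy of $\Gamma_k$ inside $\Gamma_{k+1}$, which is a \emph{finite} graph; an antipodal sequence $\{q_m^{(k)}\}_{m>k}$ with $\dist(\xi,q_m^{(k)})\to\infty$ cannot be ``constructed inside it''. If instead you intend to re-base the antipodal construction at $\overline{b}_k$, note that $\overline{b}_k$ and $\xi$ lie in the \emph{same} copy of $\Gamma_m$ for every $m>k$, so \cref{rem:antipodal} produces literally the same antipodal points $p_m$, hence the same horofunction already obtained in \cref{lem:antipodal-non-busemann} --- no new boundary points. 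The underlying issue is that a horofunction of $\Gamma_\xi$ is governed by how the ray exits the copy of $\Gamma_m$ containing $\xi$ (through $\overline{b}_m$, through $\underline{b}_m$, or balanced between the two) for \emph{all} large $m$; a single ``turn'' at one finite scale $k$ either terminates in a finite subgraph or is absorbed into one of these three asymptotic regimes, so it cannot by itself generate a new point of $\partial_h\Gamma_\xi$. Relatedly, $\overline{b}_k$ alone is not a cut point --- only the pair $\{\overline{b}_k,\underline{b}_k\}$ separates $\Gamma_{\xi[k]}$ from the rest of the graph --- so your claim that every geodesic from $\xi$ to $\gamma_k(t)$ factors through $\overline{b}_k$ is unjustified, and with it the distinctness argument via witness vertices between $\overline{b}_k$ and $\overline{b}_{k'}$.

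For contrast, the paper's proof keeps the antipodal mechanism at \emph{every} scale but perturbs it by a fixed integer offset: for each $m$ it takes the geodesic $\gamma_m$ between the two gluing vertices $A_m,B_m$ of the antipodal copy, whose midpoint $\widehat{p}_m$ defines the same horofunction as $p_m$, and sets $p_{m,t}:=\gamma_m(\widehat{t}+t)$ for a fixed $t\in\mathbb{Z}$. The shift is uniform in $m$, the identity $\dist(\underline{b}_N,p_{m,t})=\dist(\overline{b}_N,p_{m,t})+2|t|$ makes the weakly-geodesic bookkeeping go through, and distinct values of $t$ are separated by evaluating against $y=A_m$ and $y=B_m$. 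If you want to salvage your approach, the modification must be made coherently at all scales simultaneously, not at a single level $k$.
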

\begin{proof}
Let $y$ be any vertex in the graph $\Gamma_\xi$ and let $N$ be as in \cref{lem:antipodal-non-busemann}. Consider the antipodal sequence $\{p_m\}_{m \in M}$. If $A_m$ and $B_m$ are the vertices gluing the antipodal copy (or copies) of $p_m$ to the other copies in $\Gamma_m$, then by the simmetry of the space we have that $\dist(A_m,B_m)$ is even. Hence, one can consider $\widehat{p}_m$ the midpoint of a geodesic $\gamma_m$ realizing such distance and applying the same technique as in \cref{lem:antipodal-non-busemann}, one can show that $\{\widehat{p}_m\}_{m \in M}$ describes the same horofunction of the antipodal sequence.\

To retrieve a countable number of horofunctions, let us ``shift'' the sequence we have just constructed. We simply mean that if $\gamma_m (\widehat{t})=\widehat{p}_m$, then we define $p_{m,t}:=\gamma_m(\widehat{t}+t)$ with $t \in \mathbb{Z}$ such that $0<\widehat{t}+t<\dist(A_m,B_m)$. Moreover, we denote by $b_{m,t}$ (resp. $\widetilde{b}_{m,t}$) the vertex in $\{\overline{b}_N,\underline{b}_N\}$ lying on a geodesic connecting $p_{m,t}$ to $\xi$ (resp. $y$). Note that $b_{m,t}=b_{n,t}$ and similarly one sees that $\widetilde{b}_{m,t} = \widetilde{b}_{n,t}$, as we are about to show.
Indeed,
suppose that $\dist(A_m,p_{m,t})<\dist(B_m,p_{m,t})$, the other case being analogous, and note that then the same holds for $n$, that is, we must also have $\dist(A_n,p_{n,t})<\dist(B_n,p_{n,t})$. If $\dist(\xi,\overline{b}_N) \leq \dist(\xi,\underline{b}_N)$, then there is nothing to show. Conversely, the choice of one of the two vertices depend only on $t$. In fact, it is easy to see that $\dist(\underline{b}_N,p_{m,t})=\dist(\overline{b}_N,p_{m,t})+2|t|$ for all $m>N$.

In the same fashion as in \cref{lem:antipodal-non-busemann}, we get the following simplification
\begin{multline*}
|\dist(p_{m,t} , y) - \dist(p_{n,t} , y) - (\dist(\xi,p_{m,t}) - \dist(\xi,p_{n,t} ))| = \\
|\dist(b_{m,t},p_{m,t})-\dist(\widetilde{b}_{m,t},p_{m,t})-\left( \dist(b_{n,t},p_{n,t})-\dist(\widetilde{b}_{n,t},p_{n,t}) \right)|.
\end{multline*}

If $b_{m,t}=\widetilde{b}_{m,t}$ there is nothing to show. Otherwise, suppose without loss of generality that $\dist(b_{m,t},A_m) < \dist(b_{m,t},B_m)$, so that we must also have $\dist(\widetilde{b}_{m,t},A_m) > \dist(\widetilde{b}_{m,t},B_m)$.
Now, we can simplify once more by noticing that $\dist(b_{m,t},A_m)=\dist(\widetilde{b}_{m,t},B_m)$. Namely, we have
\begin{multline*}
|\dist(b_{m,t},p_{m,t})-\dist(\widetilde{b}_{m,t},p_{m,t})-\left( \dist(b_{n,t},p_{n,t})-\dist(\widetilde{b}_{n,t},p_{n,t}) \right)|\\
|\dist(A_m,p_{m,t})-\dist(B_m,p_{m,t})-\left( \dist(A_n,p_{n,t})-\dist(B_n,p_{n,t}) \right)|
\end{multline*}
Again, suppose without loss of generality that $\gamma_m(0)=A_m$ and $t>0$, then  $\dist(A_m,p_{m,t})=\dist(B_m,p_{m,t})+2t$ and the same holds for $n$. In the end the previous equality becomes
$$|\dist(A_m,p_{m,t})-\dist(B_m,p_{m,t})-\left( \dist(A_n,p_{n,t})-\dist(B_n,p_{n,t}) \right)|=|2t-2t|=0.$$

As a second step, we need to show that the sequences describe different horofunctions.\\
If $t_1t_2<0$, then we can assume that $p_{m,t_1}$ (resp. $p_{m,t_2}$) is nearer to $A_m$ (resp. $B_m$). Fix $y=A_m$ and determine the possible values of $t_1$ and $t_2$ such that 
$$\dist(A_m,p_{m,t_1})-\dist(\xi,p_{m,t_1})=\dist(A_m,p_{m,t_2})-\dist(\xi,p_{m,t_2}).$$

We thus suppose that $t_1,t_2$ satisfy the equation above. We observe that
$\dist(A_m,p_{m,t_1})-\dist(\xi,p_{m,t_1})=-\dist(\xi,A_m)$ and $\dist(\xi,p_{m,t_2})=\dist(\xi,A_m)+\dist(B_m,p_{m,t_2})$, which we substitute in the equation above to get that
$$\dist(A_m,p_{m,t_2})=\dist(B_m,p_{m,t_2}),$$ 
and so, by definition, this means that $p_{m,t_2}=\widehat{p}_m$. Fixing $y=B_m$ gives $p_{m,t_1}=\widehat{p}_m$. \

Now suppose again without loss of generality that $\gamma_m(0)=b_{m,t}$ and that $t_1\leq t_2 <0$. Fix $y$ to be the other gluing vertex different from $b_{m,t}$. We want to determine when 
$$\dist(b_{m,t},p_{m,t_1})-\dist(\xi,p_{m,t_1})=\dist(b_{m,t},p_{m,t_2})-\dist(\xi,p_{m,t_2}).$$

By construction $\dist(\xi,p_{m,t_2})=\dist(\xi,p_{m,t_1})+|t_1|-|t_2|$, while 

$$\dist(y,p_{m,t_1})=\dist(y,\widehat{p}_{m})+|t_1| \text{ and } \dist(y,p_{m,t_2})=\dist(y,\widehat{p}_{m})+|t_2|.$$

Combining all together, we end up with

$$|t_1|=|t_2|-(|t_1|-|t_2|)$$

which is true if and only if $t_1=t_2$.\\

All is left to do is to prove that they are not Busemann points, but, as in \cref{lem:antipodal-non-busemann}, this follows from \cite[Proposition 4.2]{DD16}.
\end{proof}

\begin{remark}
More precisely, the antipodal point $p_m$ and the point $\widehat{p}_m$ considered in the proof are in the same atom (as in the notation mentioned immediately before \cref{prop:infinite-non}).
\end{remark}

Combining together the two previous propositions, we have the following.

\begin{theorem} \label{mainthm}
The horofunction boundary of $\Gamma_\xi$ with $\xi=wj^\infty$ has exactly two Busemann points and contains countably many non-Busemann points. 
\end{theorem}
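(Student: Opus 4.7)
The plan is to assemble the two preceding propositions directly, since the bulk of the work has already been done. First I would invoke \cref{prop:two-busemann}, which already asserts that for $\xi = wj^\infty$ the horofunction boundary contains exactly two Busemann points, namely the horofunctions induced by the geodesic rays $\overline{\gamma}$ and $\underline{\gamma}$ through the gluing-vertex sequences $\{\overline{b}_m\}_{m \in M}$ and $\{\underline{b}_m\}_{m \in M}$. Then I would invoke \cref{prop:infinite-non} to exhibit a countably infinite family of pairwise distinct non-Busemann points, constructed from the antipodal sequence and its integer shifts along the geodesic realizing $\dist(A_m,B_m)$.

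Next I would observe that no compatibility argument is needed between these two families: since a Busemann point is by definition a horofunction induced by an almost-geodesic ray while a non-Busemann point is a horofunction that is \emph{not} induced by any such ray, the two collections are automatically disjoint subsets of $\partial_h \Gamma_\xi$. Combining them therefore yields the precise statement of the theorem: exactly two Busemann points and countably many non-Busemann points.

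The only delicate ingredient in this short assembly is the \emph{exactness} of the count ``two'' for Busemann points, which is the part I expect to inspect most carefully. This exactness is already established inside \cref{prop:two-busemann} via the cut-point structure of $\Gamma_\xi$: any infinite geodesic starting at $\xi$ must, by disconnection, traverse infinitely many members of one of the sequences $\{\overline{b}_m\}_{m\in M}$ or $\{\underline{b}_m\}_{m\in M}$, and therefore represents the same horofunction as one of the two distinguished rays $\overline{\gamma}$ or $\underline{\gamma}$. Since \cref{prop:two-busemann}, \cref{lem:antipodal-non-busemann} and \cref{prop:infinite-non} together supply all the structural information needed, I do not anticipate any further obstacle, and the proof reduces to a one-line citation of these three results.
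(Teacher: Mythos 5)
Your proposal matches the paper exactly: the theorem is obtained there by simply combining \cref{prop:two-busemann} and \cref{prop:infinite-non}, with the exactness of the count of Busemann points already carried by the cut-point argument inside \cref{prop:two-busemann}, just as you describe. No gap; your assembly is the paper's own proof.
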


We strongly believe that our result, with the addition of further technical details, could be generalized to a generic $\Gamma_\xi$, henceforth we provide the following.
\begin{conjecture}
Let $\Gamma_\xi$ be such that it grows away from $j$ and define $S:=\{s \in V_r\ \mid s \neq j \text{ and } sj \text{ occurs infinitely many times in } \xi\}$.
\begin{enumerate}
    \item If $S$ is empty, we are in the conditions of Theorem \ref{mainthm}.
    \item If $s \in S$, then there are 2 Busemann points and
    \begin{enumerate}
        \item If $j+\dfrac{r-f(r)}{2}\leq s \leq j+\dfrac{r+f(r)}{2}$,  there exist countably many non-Busemann points.
        \item If $s < j+\dfrac{r-f(r)}{2}$, there exists countably many non-Busemann points.
        \item If $s> j+\dfrac{r+f(r)}{2}$, there exists countably many non-Busemann points.
    \end{enumerate}
    The cases (a), (b) and (c) give rise to different non-Busemann points.
\end{enumerate}
\end{conjecture}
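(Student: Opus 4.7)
The plan is to extend the arguments of \cref{prop:two-busemann}, \cref{lem:antipodal-non-busemann}, and \cref{prop:infinite-non} from the symmetric case $\xi=wj^\infty$ to the richer situation in which $S\neq\emptyset$, handling each geometric position of $s$ separately. For item (1), observe that $\xi$ growing away from $j$ gives infinitely many indices $n$ with $\xi_n=j$, whereas $S=\emptyset$ forces $\xi_{n-1}=j$ for all but finitely many such $n$. Iterating, $\xi$ is cofinal to $wj^\infty$ for some finite prefix $w$, so \cref{lem:in} and \cref{thm:iso} reduce item (1) to \cref{mainthm}.

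For item (2), fix $s\in S$ and let $M$ be the infinite subset of $\mathbb{N}$ of indices $m$ for which $\xi_m\xi_{m+1}=sj$. For every $m\in M$, $\xi$ lies in the $j$-copy of $\Gamma_{\xi[m+1]}$, whose two gluing vertices I denote $\overline{b}_{m+1},\underline{b}_{m+1}$ as in \cref{sec:3}. The two-Busemann-points claim follows by recycling \cref{prop:two-busemann}: the sequences $\{\overline{b}_{m+1}\}_{m\in M}$ and $\{\underline{b}_{m+1}\}_{m\in M}$ still sit on two infinite geodesic rays starting at $\xi$, since removing either gluing vertex disconnects $\xi$ from the outer layer, and every infinite geodesic from $\xi$ must eventually pass through one of the two families because these are the only global cut points. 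The step that needs reworking is the strict inequality controlling which of $\overline{b}_{m_2}$ or $\underline{b}_{m_2}$ lies on the geodesic between $\overline{b}_{m_1}$ and $\overline{b}_{m_3}$; in place of the perfect symmetry of $\xi=j^\infty$, one should exploit the fact that for $m\in M$ the sub-copy $s$ inside the $j$-copy at level $m+1$ is always the same, yielding an ultimately periodic pattern from which the inequality follows by counting traversed copies of $\Gamma_k$.

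The three cases (a), (b), (c) correspond to the geometric position of the sub-copy labelled $s$ inside the $j$-copy at level $m+1$. In case (a), $s$ lies in the antipodal arc of $j$ (an arc of size $f(r)$ around $j+r/2$), so $\xi$ is asymptotically equidistant from $\overline{b}_{m+1}$ and $\underline{b}_{m+1}$ and the antipodal point $p_m$ from \cref{rem:antipodal} retains its symmetric role; the proofs of \cref{lem:antipodal-non-busemann} and \cref{prop:infinite-non} port over with no substantial change, producing an antipodal non-Busemann point together with countably many shifts. In cases (b) and (c), the sub-copy $s$ sits on the short arc from $j$ toward $\underline{b}_{m+1}$ (respectively $\overline{b}_{m+1}$), so $\xi$ is biased asymmetrically and a new analogue of the antipodal point must be defined as the farthest vertex $p_m$ on the $m$-hole satisfying $\dist(p_m,b_{m})=\dist(p_m,\widetilde{b}_{m})$ for the two gluing vertices selected by geodesics from $\xi$ and from an auxiliary vertex $y$. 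The corresponding shift argument again yields countably many weakly-geodesic rays, each failing to be almost-geodesic by the unchanged argument of \cite[Proposition 4.2]{DD16}.

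The main obstacle, and the reason this remains a conjecture, is the final sentence of item (2): showing that the three families of horofunctions from (a), (b), (c) are pairwise distinct. Distinctness must be witnessed by test vertices $y$ on which the three horofunctions evaluate differently, and this requires a precise asymptotic accounting of $\dist(p_m,y)-\dist(\xi,p_m)$ across the three regimes, depending on the arithmetic position of $s$ relative to $j$ modulo $r$ and on $f(r)$. A secondary, yet nontrivial, obstacle is the loss of the equality $\dist(\xi,\overline{b}_m)=\dist(\xi,\underline{b}_m)$ in cases (b), (c): the explicit telescoping used in \cref{lem:antipodal-non-busemann} and \cref{prop:infinite-non} must be replaced by an asymptotic version, where the uniform boundedness of $\dist(\xi,\overline{b}_m)-\dist(\xi,\underline{b}_m)$ along $m\in M$ is the key fact needed to pass to the limit.
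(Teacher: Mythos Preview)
There is nothing to compare against: the paper does not prove this statement. It is explicitly stated as a conjecture, introduced with the sentence ``We strongly believe that our result, with the addition of further technical details, could be generalized to a generic $\Gamma_\xi$, henceforth we provide the following.'' No argument, sketch, or indication of method accompanies it beyond the implicit suggestion that the techniques of \cref{prop:two-busemann}, \cref{lem:antipodal-non-busemann}, and \cref{prop:infinite-non} should extend.

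Your proposal is not a proof either, and you say so yourself: you correctly identify the loss of the symmetry $\dist(\xi,\overline{b}_m)=\dist(\xi,\underline{b}_m)$ and the distinctness of the three families of horofunctions as genuine obstacles, and you do not claim to overcome them. As a strategic outline your write-up is reasonable and in the spirit the authors intend --- item (1) is indeed immediate from cofinality and \cref{lem:in}, and the case split (a)/(b)/(c) according to the position of $s$ relative to the antipodal arc is the natural geometric reading of the inequalities in the statement. But since the paper offers no proof, your outline neither agrees with nor departs from anything in it; it is simply a plausible plan of attack for an open problem.
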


\section*{Acknowledgements}
 The second author is a member of the PRIN 2022 ``Group theory and its applications'' research group and gratefully acknowledges the support of the PRIN project 2022-NAZ-0286. Moreover, the second author gratefully acknowledges the support of the Universit\`a degli Studi di Milano--Bicocca (FA project 2021-ATE-0033 ``Strutture Algebriche''). The third author acknowledges support from the Swiss Government Excellence Scholarship and from Swiss NSF grant 200020-200400. All the authors are members of the Gruppo Nazionale per le Strutture Algebriche, Geometriche e le loro Applicazioni (GNSAGA) of the Istituto Nazionale di Alta Matematica (INdAM).

\printbibliography[heading=bibintoc]

\end{document}